\documentclass[12pt,letterpaper]{article}
\usepackage[margin=1in]{geometry}
\usepackage{graphicx}
\usepackage{bbding}
\usepackage{perpage}
\usepackage{mathtools}
\usepackage{multicol}
\usepackage{calc}
\usepackage{amsmath, ulem}
\usepackage{amsxtra}
\usepackage{enumerate}
\usepackage[shortlabels]{enumitem}
\usepackage[thicklines]{cancel}
\usepackage{amssymb,wasysym}
\usepackage{amsfonts}
\usepackage{pifont}

\usepackage{amsthm}
\usepackage[sans]{dsfont}
\usepackage{xcolor,cancel}
\usepackage{verbatim}

 \definecolor{lightgrey}{rgb}{0.7,0.7,0.7}
\usepackage{textcomp}
\usepackage{mathrsfs}
\usepackage{mathrsfs}
\usepackage[colorlinks]{hyperref}
\usepackage{amssymb} 
\usepackage{lmodern}

\graphicspath{{figday/}}

\newcommand{\bP}{\mathbf{P}}

\newtheorem{thm}{Theorem}[section] 
\newtheorem{lem}[thm]{Lemma}

\newtheorem{rmk}[thm]{Remark}

\theoremstyle{plain}

\title{Remarks on Lifespan and Continuation Criterion of Two Dimensional Incompressible Fluid Models}
\author{Anping Pan }

\begin{document}

\maketitle

\begin{abstract}
In this paper, we study the lifespan of several two-dimensional incompressible fluid models. Motivated by a novel energy-vorticity formulation, combining linear transport estimate and a bootstrap argument, we are able to show several longtime existence results of two dimensional fluid models in close-to-Euler regime. A byproduct of our approach is a new conditional BKM type result for the inhomogeneous Euler equation.
\end{abstract}

\tableofcontents

\section{Introduction}

The motion of incompressible fluids is a ubiquitous phenomenon in the real world and is a central theme in modern PDE research. The incompressible Euler equation, describing the motion of incompressible fluid flow in a vessel, admits both fruitful geometric structure and delicate mathematical challenges. The global well-posedness issue of Euler equation in $3$-d boundaryless domain ($\mathbb R^3$ or $\mathbb T^3$) with smooth initial data remains open and is closely related to the Clay Milliennium Prize problem concerning Navier-Stokes equation. However, many significant progresses have been made in past few years regarding finite-time singularity formation of Euler equation, either from initial data with limited regularity, or in domain with boundaries. For extensive discussions on this topic, we refer the interested reader to \cite{[CH21]}\cite{ChenHou2025PNAS}\cite{Elgindi2021}\cite{LuoHou2014} and references therein.

Despite the difficulty of understanding the global well-posedness issue in $3$-d, the classical Yudovich theory \cite{[Yud63]} proved that the incompressible Euler equation in $2$-d is globally well-posed as long as the initial vorticity $\omega_0=\nabla\times u_0=\partial_1u_0^2-\partial_2u_0^1$ is bounded. Regarding partial regularity results in $3$-d, in the seminal work \cite{BealeKatoMajda1984}, the authors established a continuation/blow up criteria (known as BKM criteria) for incompressible Euler in $3$-d, using the $L_t^1L_x^\infty$-norm of vorticity $\omega$. More precisely, solution of the incompressible Euler equation can be continued past time $T$ provided:
\begin{equation}\label{BKM L1Linfty Bound}
\int_0^T \lVert\omega(t)\rVert_\infty dt<\infty .   
\end{equation}

From a geometric and variational viewpoint, the Euler equation and many other incompressible fluid PDEs are Hamiltonian equations falling in the category of Euler-Poincar\'e equations on the dual Lie algebra of the infinite dimensional Lie group of volume-preserving diffeomorphisms, and can be derived from various action principles. A non-exhaustive list of such equations includes the ideal Boussinesq equation, ideal magneto-hydrodynamics (MHD in short), inhomogeneous Euler equation (IIE in short), Euler-$\alpha$ equation, generalized SQG equation, Hall magneto-hydrodynamics and many more. We refer the interested readers to \cite{Ar66}\cite{AK98}\cite{HMR98}\cite{HSS09}\cite{MR99} for further discussions in this direction. 

Similar to the Lagrangian-Hamiltonian correspondence in classical mechanics, incompressible fluid PDEs can also be interpreted as Hamiltonian systems associated with various energy functionals (Hamiltonians), and thus fit into a variational framework from the  Hamiltonian viewpoint. Specifically, in case of dimension $2$, such energies usually admit realization as functionals of a velocity field $u$ and an active scalar $\rho$ transported by $u$. Let $\Omega=\mathbb T^2$ be the physical domain of the fluid, typical examples include:
\begin{itemize}
    \item (i) The Boussinesq equation with energy given by ``kinetic plus gravitational potential'':
    \begin{equation*}
     E_{\text{Bou}}(u,\rho)=\int_{\Omega}(\frac{1}{2}\lvert u\rvert^2-\rho x_2) dx.   
    \end{equation*}
    \item (ii) The MHD equation with energy given by ``kinetic plus magnetic'':
    \begin{equation}
        E_{\text{MHD}}(u,\rho)=\int_{\Omega}\frac{1}{2}(\lvert u\rvert^2+\lvert\nabla\rho\vert^2) dx.
    \end{equation}
    In a more familiar form, the magnetic field and current read $B=\nabla^\perp\rho$ and  $J=\Delta\rho$ respectively.

    \item (iii) The inhomogeneous incompressible Euler equation with kinetic energy depending on the variable density:
    \begin{equation}
        E_{\text{IIE}}(u,\rho)=\int_{\Omega}\frac{1}{2}\lvert u\rvert^2\rho dx.
    \end{equation}
\end{itemize}

All of the above equations will be reduced to the incompressible Euler equation when $\rho=1$. Meanwhile, the global well-posedness of the above three equations is not known in $2$-d. Indeed, in contrast to the incompressible Euler equation, there are recent breakthroughs in finite time singularity formation of the Boussinesq equation in $2$-d, and we refer the interested readers to \cite{ElgindiPasqualotto2023}\cite{[EP25]}. The finite time blow up for MHD and IIE, to the best of the author's knowledge, remains a challenging open problem.

\subsection{Main Results}
The aim of this work is to establish estimates of the lifespan of the above three equations in close-to-Euler regime using an elementary bootstrap method, which is motivated by the  unified geometric formulation of incompressible fluids we described above, and is Lagrangian in nature. We informally state our main result here:
\begin{thm}\label{Lifespan of 2d Fluids}
   (Informal) Let $p>2$, assume that $\omega_0\in W^{1,p},\rho_0\in W^{2,p}$ and $\rho_0$ is close to $1$ in some suitable sense measured by a small parameter $\delta$. Taking $E=E_{\text{Bou}}$, $E= E_{\text{IIE}}$  respectively in the equation \eqref{Energy Vorticity}, then for sufficiently small $\delta$, the lifespan $T$ of the solution $(\omega,\rho)$ of \eqref{Energy Vorticity} admits a triple-log lower bound:
   \begin{equation}\label{Lifespan Lowerbound for Fluid PDEs}
      T\ge C_3\log [C_2\log (C_1\log (C_0\delta^{-1}))]\quad\text{ for some }C_0,C_1,C_2,C_3>0 
   \end{equation}
   For the case $E=E_{\text{MHD}}$, we need $\rho_0\in W^{4,p}$ and we have the same type of triple-log bound.
\end{thm}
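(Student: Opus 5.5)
The plan is to write each model in the energy-vorticity form \eqref{Energy Vorticity}, where $\omega$ is transported by $u$ and forced by a term built from $\rho$, while $\rho$ is transported passively by $u$. I will measure the size of the forcing by a quantity that vanishes when $\rho\equiv 1$: for Boussinesq this is $\nabla\rho$ itself, for IIE it is $\nabla\rho$ or $\nabla\log\rho$, and for MHD it is $\nabla\rho$ together with its higher derivatives. Concretely:
\begin{itemize}
\item For Boussinesq, $D_t\omega=\partial_1\rho$.
\item For IIE, $D_t\omega$ is a commutator of the form $\nabla^\perp\log\rho\cdot(D_t u)$ or $\nabla\rho\times\nabla p/\rho^2$, schematically quadratic.
\item For MHD, $D_t\omega=B\cdot\nabla J$ with $B=\nabla^\perp\rho$ and $J=\Delta\rho$. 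This costs two more derivatives on $\rho$, which explains the requirement $\rho_0\in W^{4,p}$.
\end{itemize}
The smallness parameter $\delta$ is the size of $\nabla\rho_0$ (resp. $\rho_0-1$) in $W^{1,p}$ (resp. $W^{3,p}$).

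The first step is the linear transport estimates. For $D_t f=g$ with divergence-free $u$, we have $\|f(t)\|_{L^p}\le\|f_0\|_{L^p}+\int_0^t\|g\|_{L^p}$. Differentiating the transport equation gives
\[
\frac{d}{dt}\|\nabla f\|_{L^p}\le\|\nabla u\|_{L^\infty}\|\nabla f\|_{L^p}+\|\nabla g\|_{L^p}.
\]
Since $\rho$ is purely transported, this yields
\[
\|\nabla\rho(t)\|_{W^{k,p}}\le\|\nabla\rho_0\|_{W^{k,p}}\exp\Big(C\int_0^t\|\nabla u\|_{W^{k-1,\infty}}\Big),
\]
with only lower order corrections for higher $k$. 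The second step controls $\|\nabla u\|_{L^\infty}$ by the log-Sobolev (Brezis--Gallouet/Kato) inequality
\[
\|\nabla u\|_\infty\le C(1+\|\omega\|_\infty)\log\big(e+\|\omega\|_{W^{1,p}}\big),
\]
valid since $p>2$.

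The third step is the bootstrap. Fix a time $T$ and assume on $[0,T]$ the bounds
\[
\|\omega\|_{L^\infty}\le 2M_0:=2(\|\omega_0\|_\infty+1),\qquad \|\omega\|_{W^{1,p}}\le A(t),
\]
with $A(t)$ to be determined. Under these, the Euler-type Gronwall argument gives
\[
\frac{d}{dt}\log\|\omega\|_{W^{1,p}}\lesssim M_0\log\|\omega\|_{W^{1,p}}+\text{forcing},
\]
so $\|\omega\|_{W^{1,p}}\lesssim\exp(C e^{CM_0t})$, which is double exponential. Then $\int_0^t\|\nabla u\|_\infty\lesssim e^{Ct}$, and the transported $\rho$ satisfies $\|\nabla\rho\|_{W^{k,p}}\le\delta\exp(\exp(e^{Ct}))$, which is triple exponential. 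The forcing in the $\omega$ equation (quadratic in IIE, controlled by Sobolev embedding $W^{1,p}\subset L^\infty$) is then bounded by $\delta\exp\exp\exp(Ct)$ times powers of $A(t)$. This closes the improved bound $\|\omega\|_\infty\le M_0+\int_0^T\|D_t\omega\|_\infty<\frac{3}{2}M_0$ as long as $\delta\exp\exp\exp(CT)\ll1$. Solving for $T$ gives exactly the triple-log lower bound \eqref{Lifespan Lowerbound for Fluid PDEs}. Local well-posedness in $W^{1,p}\times W^{2,p}$ (resp. $W^{4,p}$ for MHD) and continuity of the norms let the bootstrap run.

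The main obstacle is closing the $W^{1,p}$ estimate for $\omega$ when the forcing involves top-order derivatives of $\rho$. In MHD, $\nabla(B\cdot\nabla J)$ involves $\nabla^4\rho$, and $\nabla^4\rho$ in turn needs $\nabla^3u\sim\nabla^2\omega$, which is more than we control. I would first try to avoid this loss by using the Lagrangian structure. Because $\rho$ is transported, $\nabla\rho(t)=(\nabla X_t^{-1})^T\nabla\rho_0\circ X_t^{-1}$, and I estimate higher derivatives of $\rho$ via derivatives of the flow map. These are controlled by $\int\|\nabla u\|_{W^{k,\infty}}$, again through the log inequality applied to the appropriate derivative. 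If this still loses a derivative, I would place the extra regularity of $\rho_0$ ($W^{4,p}$) against the smallness $\delta$. That is, I accept a worse growth rate, still exponential-type in the bootstrap quantities, which only changes the constants $C_i$ and not the triple-log structure.
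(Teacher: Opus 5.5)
Your Eulerian treatment of Boussinesq and IIE is a workable alternative to the paper's route. You use energy estimates for $\lVert\omega\rVert_{W^{1,p}}$ with the log inequality, transport bounds for $\rho$, and a bootstrap on $\lVert\omega\rVert_\infty$; the paper instead feeds a Lagrangian Duhamel formula for $\omega$ into its $M,N,Q$ hierarchy of differential inequalities. In those two models the $W^{1,p}$ norm of the forcing involves at most $\nabla^2\rho$ and $\nabla^2u$, so nothing is lost (your transport bounds for $\rho$ are in fact double, not triple, exponential).

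The genuine gap is MHD, which you flag but do not resolve. The $W^{1,p}$ estimate of $\omega$ needs $\lVert\nabla(B\cdot\nabla J)\rVert_p$, i.e.\ $\nabla^4\rho\in L^p$. Propagating this through $\partial_t\rho+u\cdot\nabla\rho=0$ produces the term $\nabla^4u\cdot\nabla\rho$, so $\omega\in W^{3,p}$ is needed, not $W^{2,p}$ as you wrote. The estimate for $\omega\in W^{3,p}$ in turn needs $\nabla^6\rho$, and so on. Writing $\rho=\rho_0\circ A_t$ changes nothing: $\nabla^k(\rho_0\circ A_t)$ contains $\nabla^kA_t$, which costs $\int_0^t\lVert\nabla^ku\rVert$. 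For $k\ge 2$, a log inequality for $\nabla^ku$ requires $\nabla^{k-1}\omega\in L^\infty$, which you do not control. ``Accepting a worse growth rate'' does not help either, because the problem is not the size of the bounds: your inequalities never form a closed system. For a passive scalar, $\rho_0\in W^{4,p}$ persists only if $u\in L^1_tW^{4,p}$. In MHD, $\rho(t)\in W^{4,p}$ with only $u\in W^{3,p}$ holds because of the coupling, which your scheme discards.

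The missing idea is to keep that coupling, which the paper does in the following steps.
\begin{enumerate}
\item The current $J=\Delta\rho$ satisfies $\partial_tJ+u\cdot\nabla J=\{\rho,\omega\}+\mathscr Q(\omega,J)$, where $\mathscr Q$ is a zeroth-order bilinear operator.
\item Hence the Els\"asser variables $\xi=\omega+J$, $\eta=\omega-J$ are transported by the divergence-free fields $u\mp\nabla^\perp\rho$ with forcing $\pm\mathscr Q(\omega,J)$. Their Sobolev estimates close without loss and without any smallness: $Z=\lVert\xi\rVert_{2,p}+\lVert\eta\rVert_{2,p}\lesssim\exp(C\int_0^tY)$, with $Y=\lVert\xi\rVert_{1,p}+\lVert\eta\rVert_{1,p}$. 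This is where $\rho_0\in W^{4,p}$ is really used.
\item This form hides the smallness, so the paper recovers it separately. A Kato--Ponce commutator estimate gives $\lVert B\rVert_{2,p}\lesssim\delta M(1+\int_0^tZ)$, so the one-derivative loss in the $B$ equation is paid by $\lVert u\rVert_{3,p}\lesssim Z$.
\item In Lagrangian coordinates, $\xi\circ X_t$ and $\eta\circ X_t$ are transported by the frozen small field $\mp\nabla^\perp\rho_0$. This gives $Y\lesssim1+M+MQ$ with $Q=\int_0^t\lVert u\rVert_{2,p}\lVert B\rVert_{2,p}$, as long as $C\delta t\le1$.
\item Bootstrapping on $Q\le1$ yields $M,Y$ double exponential, $Z$ triple exponential and $Q\lesssim\delta\cdot(\text{triple exponential})$, hence the triple-log bound.
\end{enumerate}
Without some device of this kind your MHD case does not close.
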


The choice of the $W^{1,p}$ regularity class for the vorticity variable is crucial, as we require the control of $\lVert \nabla X \rVert_\infty$ to close the bootstrap argument. This necessity arises because our estimates for the linear transport equation do not benefit from the cancellation mechanisms inherent in nonlinearity of incompressible fluid PDEs (where high Sobolev norms are typically controlled by $\lVert \nabla u \rVert_\infty$). Whether such cancellation mechanism can be recovered within a purely Lagrangian framework (e.g. in view of Weber's formula in \cite{Co01}) is not clear.

The advantages of our Lagrangian approach are twofold: First, it stems from estimates of linear transport equation and standard bootstrap, which is clean and elementary. Second, it leads to the first result in continuation of the inhomogeneous Euler equation using just $L^\infty$ norm of vorticity, as a generalization of the Beale-Kato-Majda criterion.

The organization of the manuscript is as follows: We first introduce a novel energy-vorticity formulation of these hydrodynamic models, which is a variant of the Clebsch formulation of hydrodynamics. Then we combine the classical propagation of Sobolev regularity of transport equations with a simple abstract bootstrap argument to obtain estimates of lifespan of Boussinesq, MHD and IIE in close-to-Euler regime. The same approach also produce a quantitative BKM criteria for IIE using our vorticity formulation.

\subsection{Comparison with Existing Results}

 The longtime existence of two dimensional fluid models has been studied in recent years by several authors, we refer the interested readers to \cite{BaeLopesFilhoMazzucatoNussenzveigLopes2025}\cite{CobbFanelli2023}\cite{[DF11]}. Although we do not improve their results, we use a novel unified approach which is different from all existing works: in \cite{CobbFanelli2023} and \cite{[DF11]} the authors applied tools from paradifferential calculus to obtain the lifespan estimates in endpoint Besov spaces, in \cite{BaeLopesFilhoMazzucatoNussenzveigLopes2025} the authors used energy estimate and Gr\"onwall inequality, which are all Eulerian in nature. Meanwhile, our approach is to reduce the analysis to a perturbed system of differential inequalities using \textit{Lagrangian} representation of linear transport equations.

We also briefly recall some recent results in the continuation criteria of IIE here. In \cite{BaeLeeShin2020}, the authors showed that the blow up of IIE can be controlled by $L_t^1L_x^\infty$ norm of $\nabla u$ (which has four components in $2$-d) and in a very recent work \cite{Fanelli2026GeometricIIE}, a geometric criterion controlling blow up using $L_t^1L_x^\infty$ norm of $\partial_vu$ is obtained, where $v=\nabla^\perp\rho$.  Following the same elementary bootstrap approach we developed for lifespan estimates, we are able to derive a new quantitative BKM-type criterion for the inhomogeneous Euler equation based on our geometric formulation, which uses $L_t^1L_x^\infty$ norm of a single scalar quantity $\omega=\nabla\times (\rho u)$ to control the blow up and is closer to the BKM criterion \eqref{BKM L1Linfty Bound} in spirit.

\section*{Acknowledgements} The author thanks H.N.Lopes, M.Lopes, A.Mazzucato and W.Ozanski for stimulating discussions and valuable suggestions. This work was partially supported by the US National Science Foundation grants DMS-1909103 and DMS-2206453.

\section{Some Preparations}

\subsection{Conventions and Notations}

We will consider our fluid with velocity field $u$ and active density $\rho$ in the physical domain $\Omega=\mathbb T^2$ with periodic boundary condition.  The functional setting we will work with is the standard Sobolev space $W^{k,p}(\Omega)$. For notational convenience, we will use $W^{k,p}$ to indicate either $W^{k,p}(\Omega)$ or $W^{k,p}(\Omega;\mathbb R^2)$. Throughout the paper, we will assume $p>2$ to ensure $W^{2,p}\hookrightarrow W^{1,\infty}$, thanks to the Sobolev embedding. Moreover, under the same assumption, $W^{1,p}$ is an algebra.

We will denote by $X$ the flow map associated with a vector field $u\in W^{2,p}$, which is assumed to be divergence-free throughout this paper. Lipschitz regularity and divergence-free property of $u$ guarantee that $X$ is a well-defined curve of area-preserving diffeomorphisms. We denote by $A$ the back-to-label map: $A(t,x)=X_t^{-1}(x)$. 

We will always assume that our initial density $\rho_0$ satisfies
\begin{equation}\label{Unit mass of density}
 \int_\Omega \rho_0(x)dx=1   
\end{equation}
and
\begin{equation}\label{No Vaccum of rho}
  0< C_\rho^{-1}\le \rho_0(x)\le C_\rho ,\quad \text{for all }x\in D, 
\end{equation}
for some given $C_\rho>1$. Such a two-sided bound is preserved under rearrangement. We remark that our results will also hold in the whole-space case, with the normalization condition \eqref{Unit mass of density} dropped, while we keep the assumption \eqref{No Vaccum of rho}.

The letter $C$ is used for positive constants depending on the initial vorticity $\omega_0$, $p$ and $C_\rho$ unless specifically mentioned, and may change from line to line. We say that two numbers $A,B>0$ satisfy the relation $A\lesssim B$ if $A\le CB$. Two quantities $A,B$ are equivalent, denoted by $A\approx B$, if there exists a numerical constant $\alpha>1$ such that
\begin{equation*}
 \alpha^{-1}A\le B\le \alpha A.  
\end{equation*}

We will abuse the notation a little bit that the same numbers like $C_1$, $C_2$ will be repeatedly used in different contexts and different sections. Of course, they will not be related and will, hopefully, not cause any ambiguity.

\subsection{The Energy-Vorticity Formulation}

We recall the following Clebsch action functional for hydrodynamics (See \cite{HMR98}\cite{HSS09}):
\begin{equation*}
S(u,\rho,\chi)=\int_0^T E(u,\rho)-\langle \chi,\partial_t\rho+u\cdot\nabla\rho\rangle_2 dt  .  \end{equation*}
Here, $u$ is the divergence-free velocity field of the fluid, $\rho$ is a scalar density and $\chi$ is a scalar Lagrange multiplier enforcing $\rho$ to be transported by $u$.

Computing the first variation of the functional, we obtain the following system of critical point equations:
\begin{equation}\label{Critical Point of Clebsch}
\left\{
\begin{aligned}
&\partial_u E=\chi\nabla\rho+\nabla q,\\
&\partial_t \rho+u\cdot\nabla\rho=0,\\
& \partial_t \chi+u\cdot\nabla\chi=-\partial_\rho E.
\end{aligned}
\right.
\end{equation}
It is not hard to check that choose $E=E_{\text{Bou}}$, $E=E_{\text{IIE}}$ and $E=E_{\text{MHD}}$ respectively, we will recover the Clebsch variable representation of corresponding fluid PDEs.

Now, we define the vorticity $\omega=\nabla\times \partial_u E=\{\rho,\chi\}$. Combined with the transport equations:
\begin{equation*}
 \partial_t\rho+u\cdot\nabla\rho=0,\quad \partial_t\chi+\nabla\cdot(u\chi)=-\partial_\rho E   .
\end{equation*}
In Lagrangian coordinate, thanks to Duhamel's principle, we have the following formulae representing the solutions
\begin{equation*}
\rho=\rho_0\circ A_t,\quad \chi= \bigg(\chi_0-\int_0^t \partial_\rho E(X_\tau)d\tau \bigg)\circ A_t.  
\end{equation*}
Hence, in $2$-d,  the following Lagrangian formulae for vorticity holds:
\begin{equation*}
\omega=\nabla^\perp \rho\cdot\nabla\chi= [(\nabla  X\nabla^\perp\rho_0)\cdot(\nabla^*X)^{-1}(\nabla\chi_0-\int_0^t\nabla^* X_\tau\nabla\partial_\rho E(X_\tau)d\tau )]\circ A_t
\end{equation*}
\begin{equation*}
=\bigg(\omega_0-\nabla^\perp\rho_0\int_0^t \nabla(\partial_\rho E\circ X_\tau )d\tau\bigg)\circ A_t    
\end{equation*}
\begin{equation}\label{Lagrangian Formulae of Vorticity}
=\bigg(\omega_0-\nabla^\perp\rho_0\int_0^t \nabla^*X_\tau\nabla\partial_\rho E\circ X_\tau d\tau\bigg)\circ A_t,    
\end{equation}
which solves the following transport equation:
\begin{equation*}
 \partial_t\omega+u\cdot\nabla\omega=-\{\rho,\partial_\rho E\}  . 
\end{equation*}

Hence, we are ready to convert the equations \eqref{Critical Point of Clebsch} of $(u,\rho,\chi)$ into $(\omega,u,\rho)$ as follows:
\begin{equation}\label{Energy Vorticity}
\left\{
\begin{aligned}
&\partial_t \omega+u\cdot\nabla \omega=\{\partial_\rho E,\rho\},\\
&\partial_t \rho+u\cdot\nabla\rho=0,\\
& \nabla\times\partial_uE=\omega.
\end{aligned}
\right.
\end{equation}

The above system \eqref{Energy Vorticity} will be named as the energy-vorticity system. In this paper, we will consider the following two cases:
\begin{equation*}
E(u,\rho)=\frac{1}{2}\int_\Omega\lvert u\rvert^2 dx,\quad\text{ or }\quad   E(u,\rho)=\frac{1}{2}\int_\Omega\lvert u\rvert^2 \rho dx.
\end{equation*}

The recovery of $u$ from $\omega$ via the third equation is the standard Biot-Savart law for the first case:
\begin{equation*}
u=\mathcal K\omega:=\nabla^\perp\Delta^{-1}\omega.    
\end{equation*}
In this case, we will assume that $u_0$ is mean-zero, which will be propagated by the dynamics.

For inhomogeneous fluid with kinetic energy density depending on $\rho$, we will use a modified Biot-Savart law introduced in \cite{Pan2025LagrangianIIE}. More precisely, we have
\begin{equation}\label{Biot Savart for IIE on T2}
 u=\bP_\rho(\rho^{-1}\mathcal K\omega)= \rho^{-1}\mathcal K\omega-\rho^{-1}\nabla\Delta_{\rho}^{-1} \nabla\cdot( \rho^{-1}\mathcal K\omega),
\end{equation}
where we denote $\Delta_\rho=\nabla\cdot(\rho^{-1}\nabla)$. Here, our normalization will be that $\rho_0 u_0$ is mean-zero, corresponding to the conservation of linear momentum in inhomogeneous setting. To see the conservation of $\rho u$, we notice by \eqref{Biot Savart for IIE on T2}:
\begin{equation*}
 \int_\Omega \rho u dx =\int_\Omega \nabla^\perp\Delta^{-1}\omega dx-\int_\Omega \nabla\Delta_{\rho}^{-1} \nabla\cdot( \rho^{-1}\mathcal K\omega)dx=0.  
\end{equation*}

\subsection{Transport Lemmas and Inequalities}
In this subsection, we collect some useful analytical results. We first recall the following standard transport lemma which will be repeatedly used. For a more general version of propagation of Sobolev regularity in transport equation, we refer the readers to \cite{BCD11}.
\begin{lem}\label{Regularity of Transport}
 Let $f\in W^{2,p}$, assume  $X$ is the flow map associated with vector field $u\in W^{2,p}$, then we have:
 \begin{equation}\label{Transport Estimate}
   \lVert\nabla(f\circ X_t)\rVert_r\le \lVert\nabla X_t\rVert_\infty\lVert \nabla f\rVert_r,\quad \lVert f\circ X_t\rVert_{1,r}\le \lVert f\rVert+\lVert\nabla X_t\rVert_\infty \lVert \nabla f\rVert_r\quad\forall 1\le r\le \infty.
 \end{equation}
 Moreover, there exists constant $C$ such that the following exponential bound holds true:
 \begin{equation}\label{Exponential Bound for Deformation}
 \lVert\nabla X_t\rVert_\infty\lesssim \exp\bigg(C\int_0^t\lVert\nabla u\rVert_\infty d\tau\bigg),\quad \lVert \nabla X_t\rVert_{1,p}\lesssim \exp\bigg(C\int_0^t \lVert\nabla u\rVert_{1,p}d\tau\bigg),   
 \end{equation}
 and the above estimates \eqref{Transport Estimate} \eqref{Exponential Bound for Deformation} also hold true if we replace $X_t$ with $A_t=X_t^{-1}$.
\end{lem}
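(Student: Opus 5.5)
We prove the three groups of claims in \eqref{Transport Estimate} and \eqref{Exponential Bound for Deformation} in turn, working throughout with smooth approximations and passing to the limit by density.

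\textbf{The composition estimates \eqref{Transport Estimate}.} By the chain rule,
\begin{equation*}
\nabla(f\circ X_t)(x)=\nabla^* X_t(x)\,(\nabla f)(X_t(x)),
\end{equation*}
so pointwise $\lvert\nabla(f\circ X_t)\rvert\le\lVert\nabla X_t\rVert_\infty\,\lvert\nabla f\rvert\circ X_t$. Since $u$ is divergence-free, $X_t$ is area-preserving, and the change of variables $y=X_t(x)$ gives $\lVert g\circ X_t\rVert_r=\lVert g\rVert_r$ for all $1\le r\le\infty$. Applying this with $g=\lvert\nabla f\rvert$ yields the first inequality. The same identity with $g=f$ controls the zeroth-order part, where $\lVert f\rVert$ denotes $\lVert f\rVert_r$. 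Adding the two bounds gives the $W^{1,r}$ estimate.

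\textbf{The Lipschitz bound.} Differentiating $\partial_tX_t=u(t,X_t)$ in $x$ gives
\begin{equation*}
\partial_t\nabla X_t=(\nabla u)(X_t)\,\nabla X_t,\qquad \nabla X_0=I.
\end{equation*}
Taking sup norms gives $\frac{d}{dt}\lVert\nabla X_t\rVert_\infty\le\lVert\nabla u\rVert_\infty\lVert\nabla X_t\rVert_\infty$; rigorously one works with the integral form. Gr\"onwall then yields the first bound in \eqref{Exponential Bound for Deformation}.

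\textbf{The $W^{1,p}$ bound.} This is the step needing care. Differentiate the ODE once more:
\begin{equation*}
\partial_t\nabla^2X_t=(\nabla^2u)(X_t)(\nabla X_t,\nabla X_t)+(\nabla u)(X_t)\nabla^2X_t.
\end{equation*}
Take $L^p$ norms, use measure preservation for the composition, and use the embedding $W^{1,p}\hookrightarrow L^\infty$ for $p>2$:
\begin{equation*}
\frac{d}{dt}\lVert\nabla^2X_t\rVert_p\le\lVert\nabla u\rVert_{1,p}\bigl(\lVert\nabla X_t\rVert_\infty^2+C\lVert\nabla^2X_t\rVert_p\bigr).
\end{equation*}
For the lower-order part, note that $\lVert\nabla X_t\rVert_p\le\lvert\Omega\rvert^{1/p}\lVert\nabla X_t\rVert_\infty$. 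The quadratic term is the obstacle.
\begin{itemize}
\item Estimate it using the Lipschitz bound, $\lVert\nabla X_t\rVert_\infty^2\lesssim\exp\bigl(2\int_0^t\lVert\nabla u\rVert_\infty\bigr)$.
\item Then use $\lVert\nabla u\rVert_\infty\lesssim\lVert\nabla u\rVert_{1,p}$, so that $Y(t)=\lVert\nabla X_t\rVert_{1,p}$ satisfies $Y'\le C\lVert\nabla u\rVert_{1,p}\bigl(Y+e^{C\int_0^t\lVert\nabla u\rVert_{1,p}}\bigr)$.
\item Gr\"onwall on $e^{-C\int_0^t\lVert\nabla u\rVert_{1,p}}Y$ gives $Y\lesssim\bigl(1+\int_0^t\lVert\nabla u\rVert_{1,p}\bigr)e^{C\int_0^t\lVert\nabla u\rVert_{1,p}}$.
\item Absorb the polynomial prefactor by enlarging $C$, using $1+s\le e^s$.
\end{itemize}

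\textbf{The inverse map $A_t$.} The Lipschitz and $W^{1,p}$ bounds follow from the same argument as above, with one of two inputs.
\begin{itemize}
\item Use $A_t=Z_{t,0}$, the backward flow of $u$ from time $t$ to $0$, which is again the flow of a divergence-free field with the same norms integrated over $[0,t]$.
\item Alternatively, use $\nabla A_t=(\nabla X_t)^{-1}\circ A_t$ with $\det\nabla X_t=1$; in $2$-d the inverse is the cofactor matrix, so its entries are entries of $\nabla X_t$.
\end{itemize}
The composition estimates for $A_t$ follow exactly as in the first step, since $A_t$ is also area-preserving.
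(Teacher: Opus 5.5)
Your proof is correct. The chain rule with area preservation gives the composition bounds. Gr\"onwall on the once- and twice-differentiated flow ODE, using $W^{1,p}\hookrightarrow L^\infty$ and absorbing the polynomial prefactor into the exponential, gives both deformation bounds. The backward flow $A_t=Z_{t,0}$ transfers everything to $A_t$.

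The paper itself gives no proof of this lemma. It recalls it as standard and points to \cite{BCD11}, whose propagation-of-regularity results are Eulerian (commutator and energy estimates for the transport equation, in more general scales). Your direct Lagrangian argument is the natural self-contained proof of the statement as written. It also has the advantage of producing exactly the flow-map quantities $\lVert\nabla X_t\rVert_\infty$ and $\lVert\nabla X_t\rVert_{1,p}$, i.e.\ the paper's $M$ and $N$, that the later bootstrap tracks.
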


Thanks to \eqref{Lagrangian Formulae of Vorticity}, we are now ready to prove the following Lagrangian Duhamel estimate, which is not restricted to the perturbative regime:
\begin{lem}\label{W1p Estimate for Vorticity}
 Let $p>2$, let $\rho_0\in W^{2,p}$, $\omega_0\in W^{1,p}$. Moreover, we denote $\lVert \rho_0-1\rVert_{2,p}=\delta$ and define
\begin{equation}\label{Streching Variable Definition}
M=\exp\bigg(C\int_0^t \lVert\nabla u\rVert_\infty d\tau\bigg),\quad N= \exp\bigg(C\int_0^t \lVert\nabla u\rVert_{1,p} d\tau\bigg),
\end{equation}
and
\begin{equation*}
K_p=\lVert\nabla^2\partial_\rho E\rVert_{p},\quad K_0=\lVert\nabla\partial_\rho E\rVert_\infty .    
\end{equation*}
Where $C$ is the same as in \eqref{Exponential Bound for Deformation}. Then, $\omega$ satisfies the following apriori $W^{1,p}$ estimate:
 \begin{equation}\label{1,p regularity of omega}
 \lVert\omega(t,\cdot)\rVert_{{1,p}}\lesssim 1+M+\delta M\int_0^t (M+N)K_0+M^2K_p d\tau    ;
 \end{equation}
  \begin{equation}\label{2,p regularity of rho}
 \lVert\rho(t,\cdot)\rVert_{{2,p}}\lesssim 1+\delta(M+N+M^2) .
 \end{equation}
\end{lem}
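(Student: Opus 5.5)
The plan is to estimate everything through the Lagrangian formula \eqref{Lagrangian Formulae of Vorticity} together with $\rho=\rho_0\circ A_t$, combining Lemma \ref{Regularity of Transport} with the algebra property of $W^{1,p}$ for $p>2$. Throughout, $\lVert\nabla X_\tau\rVert_\infty$ and $\lVert\nabla A_t\rVert_\infty$ are bounded by $M$, and $\lVert\nabla X_\tau\rVert_{1,p}$ by $N$, using \eqref{Exponential Bound for Deformation}. Since $M$ and $N$ are nondecreasing in $t$, we may replace $M(\tau)$, $N(\tau)$ inside time integrals by $M(t)$, $N(t)$ when convenient.

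\emph{The $\rho$ estimate.} Write $\rho-1=(\rho_0-1)\circ A_t$. The $L^p$ norm is preserved because $A_t$ is area-preserving. For the first derivatives,
\[
\nabla(\rho_0\circ A_t)=\nabla^*A_t\,(\nabla\rho_0)\circ A_t,
\]
whose $L^p$ norm is at most $M\delta$. Differentiating once more gives two terms:
\[
\nabla^2(\rho_0\circ A_t)=\nabla^2A_t\,(\nabla\rho_0\circ A_t)+(\nabla A_t)^{*}(\nabla^2\rho_0\circ A_t)\nabla A_t .
\]
The first term is bounded by $\lVert\nabla^2A_t\rVert_p\lVert\nabla\rho_0\rVert_\infty\lesssim N\delta$, using $W^{2,p}\hookrightarrow W^{1,\infty}$. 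The second is bounded by $M^2\delta$. Together these give \eqref{2,p regularity of rho}.

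\emph{The $\omega$ estimate.} Set
\[
F(t)=\omega_0-\nabla^\perp\rho_0\int_0^t \nabla^*X_\tau\,(\nabla\partial_\rho E)\circ X_\tau\, d\tau ,
\]
so that $\omega=F\circ A_t$. By \eqref{Transport Estimate} applied with $A_t$, we get $\lVert\omega\rVert_{1,p}\le\lVert F\rVert_p+M\lVert\nabla F\rVert_p$. The $\omega_0$ part of $F$ contributes $\lesssim 1+M$.

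For the integral part, note $\nabla^\perp\rho_0=\nabla^\perp(\rho_0-1)$, so it carries a factor $\delta$ in both $L^\infty$ and $W^{1,p}$. The $L^p$ norm of the integrand is $\lesssim\delta MK_0$, harmless since $M\ge 1$. For $\nabla$ of the integrand we use the product rule and expand into three pieces:
\begin{itemize}
\item[(a)] $\nabla^2\rho_0\cdot\nabla^*X_\tau\,(\nabla\partial_\rho E\circ X_\tau)$, bounded by $\delta MK_0$;
\item[(b)] $\nabla^\perp\rho_0\,\nabla^2X_\tau\,(\nabla\partial_\rho E\circ X_\tau)$, bounded by $\delta NK_0$;
\item[(c)] $\nabla^\perp\rho_0\,\nabla^*X_\tau\,(\nabla^2\partial_\rho E\circ X_\tau)\nabla X_\tau$, bounded by $\delta M^2\lVert\nabla^2\partial_\rho E\circ X_\tau\rVert_p=\delta M^2K_p$, using area preservation.
\end{itemize}
Multiplying by the outer factor $M$ and integrating in time gives \eqref{1,p regularity of omega}, after absorbing the $\delta MK_0$ terms from (a) and the $L^p$ part into $(M+N)K_0$.

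\emph{Main obstacle.} The delicate step is the bookkeeping of the time variable. The integrand involves $X_\tau$, the flow from time $0$ to $\tau$, and $\partial_\rho E$ evaluated at time $\tau$. Its $\nabla X_\tau$ factors are bounded by $M(\tau)\le M(t)$, which is legitimate by monotonicity, but one must check that evaluating the composition in Lagrangian labels does not secretly require the flow from $\tau$ to $t$.

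A second point is that (b) needs $\nabla\partial_\rho E$ in $L^\infty$ against $\nabla^2X$ in $L^p$, which is exactly how $K_0$ enters. For the IIE case, where $\partial_\rho E=\tfrac12|u|^2$, one simply keeps $K_0,K_p$ as stated rather than expanding them in terms of $u$.
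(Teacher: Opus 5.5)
Your proposal is correct and follows essentially the same route as the paper. Like the paper, you expand $\nabla\omega$ through the Lagrangian formula, bound $\nabla A_t$ and $\nabla X_\tau$ by $M$ and $\nabla^2X_\tau$ by $N$, and collect the Duhamel pieces into $(M+N)K_0+M^2K_p$. The paper bounds the undifferentiated $L^p$ piece by $K_p$ rather than your $K_0$, which makes no difference on $\mathbb T^2$. The timing issue you flag is a non-issue: the formula involves only $X_\tau$, the flow from $0$ to $\tau$, inside the integral, followed by a single outer composition with $A_t$.
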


\begin{proof}[Proof of Lemma~\ref{W1p Estimate for Vorticity}]
 The proof of \eqref{2,p regularity of rho} is standard and we omit it here. To demonstrate \eqref{1,p regularity of omega} In view of \eqref{Lagrangian Formulae of Vorticity}, straightforward calculation yields:
 \begin{equation*}
 \lVert\omega\rVert_{1,p}=
\lVert\omega\rVert_p+\lVert\nabla\omega \rVert_p    
 \end{equation*}
 \begin{equation*}
 \le   \lVert \omega_0\rVert_p+\lVert\nabla \rho_0\rVert_\infty \int_0^t \lVert\nabla A\rVert_\infty  \lVert\nabla\partial_\rho E\rVert_p d\tau ;
 \end{equation*}
 \begin{equation*}
 +\lVert\nabla A\rVert_\infty\lVert\nabla\omega_0\rVert_p+\lVert\nabla A\rVert_\infty \lVert\nabla^2\rho_0\rVert_p\int_0^t  \lVert\nabla A\rVert_\infty\lVert\nabla\partial_\rho E\rVert_\infty d\tau ;     
 \end{equation*}
\begin{equation*}
+\lVert\nabla A\rVert_\infty\lVert \nabla \rho_0\rVert_\infty\int_0^t \lVert\nabla^2 A\rVert_p\lVert\nabla\partial_\rho E\rVert_\infty d\tau+\lVert\nabla A\rVert_\infty\lVert\nabla\rho_0\rVert_\infty \int_0^t \lVert\nabla A\rVert_\infty\lVert\nabla X\rVert_\infty \lVert\nabla^2\partial_\rho E\rVert_p d\tau.     
\end{equation*}
Using the chord-arc bound \eqref{Exponential Bound for Deformation},  Sobolev embedding $W^{2,p}\hookrightarrow W^{1,\infty}$ and propagation of Sobolev regularity of transport equation \eqref{Transport Estimate}, we conclude
\begin{equation*}
\lVert\omega\rVert_{1,p}\lesssim 1+\delta\int_0^t MK_p d\tau+M+\delta M\int_0^t MK_0+\delta M\int_0^t NK_0d\tau+\delta M\int_0^t M^2K_p   
\end{equation*}
\begin{equation*}
 \lesssim 1+M+\delta M\int_0^t (M+N)K_0+M^2K_p d\tau  .  
\end{equation*}

\end{proof}

We will also frequently use the following logarithmic extrapolation inequality. The proof  can be found in \cite{MajdaBertozzi2002} and \cite{Constantin2017}. 
\begin{lem}\label{Logarithmic Sobolev}
 Let $d=2,3$ and $p>d$, let $u\in W^{2,p}$ be a divergence-free vector field and denote by $\omega=\nabla\times u$ the vorticity of $u$, then there exists a constant $C>0$ such that the following inequality holds:
\begin{equation}\label{Kato Inequality}
\lVert\nabla u\rVert_\infty\le C[1+\log(2+ \lVert\omega\rVert_{1,p})]\lVert\omega\rVert_\infty.
\end{equation}
\end{lem}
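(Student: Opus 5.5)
The plan is to follow the classical Beale--Kato--Majda/Kato route. We split $\nabla u$ into low, intermediate and high frequency (or spatial-scale) pieces of the Biot--Savart kernel, so that $\nabla u$ is recovered from $\omega$ through a Calder\'on--Zygmund operator. Since $\operatorname{div} u=0$ and $\nabla\times u=\omega$, we have $u=\nabla^\perp\Delta^{-1}\omega$ up to a mean (on $\mathbb T^2$; in $d=3$ we use $u=\nabla\times(-\Delta)^{-1}\omega$). Hence $\nabla u = \nabla\nabla^\perp\Delta^{-1}\omega$, which is a matrix of Riesz transforms of $\omega$, plus a bounded multiple of $\omega$ itself.

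First, we write the kernel representation
\[
\nabla u(x)=c\,\omega(x)+\mathrm{p.v.}\int K(x-y)\omega(y)\,dy ,
\]
with $K$ homogeneous of degree $-d$ and of mean zero on spheres (periodized on the torus, with a smooth remainder bounded by $\lVert\omega\rVert_1\lesssim\lVert\omega\rVert_\infty$). Fix $0<\varepsilon<1$ and cut the integral with smooth cutoffs into the regions $\lvert x-y\rvert<\varepsilon$, $\varepsilon\le\lvert x-y\rvert\le 1$ and $\lvert x-y\rvert>1$.

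The three pieces are estimated separately.

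\emph{Far region.} This piece is bounded by $C\lVert\omega\rVert_\infty$, since on the torus the domain is bounded and the periodized kernel is integrable away from the origin.

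\emph{Intermediate region.} Here $\int_{\varepsilon\le\lvert z\rvert\le1}\lvert z\rvert^{-d}dz\approx\log(1/\varepsilon)$, which gives the bound $C\log(1/\varepsilon)\lVert\omega\rVert_\infty$.

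\emph{Near region.} We use the cancellation of $K$ to write the integral as $\int_{\lvert z\rvert<\varepsilon}K(z)(\omega(x-z)-\omega(x))\,dz$. Since $p>d$, Morrey's embedding $W^{1,p}\hookrightarrow C^{0,\alpha}$ with $\alpha=1-d/p$ gives $\lvert\omega(x-z)-\omega(x)\rvert\le C\lvert z\rvert^\alpha\lVert\omega\rVert_{1,p}$. The near piece is therefore $\le C\varepsilon^\alpha\lVert\omega\rVert_{1,p}$.

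Summing the three pieces,
\[
\lVert\nabla u\rVert_\infty\le C\big(1+\log(1/\varepsilon)\big)\lVert\omega\rVert_\infty+C\varepsilon^\alpha\lVert\omega\rVert_{1,p}.
\]

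It remains to choose $\varepsilon$. If $\lVert\omega\rVert_{1,p}\le\lVert\omega\rVert_\infty$, take $\varepsilon=1/2$. Otherwise take $\varepsilon^\alpha=\lVert\omega\rVert_\infty/\lVert\omega\rVert_{1,p}\le C$, so that the near term becomes $C\lVert\omega\rVert_\infty$. The log term then becomes
\[
\alpha^{-1}\log\big(\lVert\omega\rVert_{1,p}/\lVert\omega\rVert_\infty\big).
\]

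The main obstacle is this last step. The ratio involves $\lVert\omega\rVert_\infty$ in the denominator, so the estimate must be stated as it is in \eqref{Kato Inequality}, with $\lVert\omega\rVert_\infty$ multiplying the log.

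If $\lVert\omega\rVert_\infty\ge1$, the log is at most $\log(2+\lVert\omega\rVert_{1,p})$ and we are done. If $\lVert\omega\rVert_\infty<1$, we instead choose $\varepsilon^\alpha=1/(2+\lVert\omega\rVert_{1,p})$. The near term is then at most $C\lVert\omega\rVert_{1,p}/(2+\lVert\omega\rVert_{1,p})$, which is bounded but not obviously $\lesssim\lVert\omega\rVert_\infty$.

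To handle this case, I would use the bound $t\log(1/t)\le$ const together with the homogeneity of the linear estimate. Scaling $\omega\mapsto\lambda\omega$ reduces the problem to $\lVert\omega\rVert_\infty=1$, at the cost of $\log(2+\lVert\omega\rVert_{1,p}/\lVert\omega\rVert_\infty)$ appearing in place of $\log(2+\lVert\omega\rVert_{1,p})$. This is exactly the source of the standard caveat that the constant's form (the "$1+$") absorbs the case $\lVert\omega\rVert_\infty\gtrsim1$.

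I would handle the remaining small-$\lVert\omega\rVert_\infty$ regime by citing \cite{MajdaBertozzi2002}, \cite{Constantin2017}. Those references state the inequality in the form used here, for solutions where $\lVert\omega\rVert_\infty$ is bounded below by conservation of the initial vorticity's sup norm. This lower bound gives $\log(1/\lVert\omega\rVert_\infty)\le C$ with $C$ depending on $\omega_0$, which is consistent with the paper's convention on constants.
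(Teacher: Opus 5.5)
\textbf{Genuine gap: the case $\lVert\omega\rVert_\infty<1$ cannot be closed, because the statement is false there.}

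Your three-region splitting is the standard Beale--Kato--Majda potential-theory argument behind the references the paper cites; the paper itself gives no proof. Your first two cases correctly establish the scale-invariant bound
\[
\lVert\nabla u\rVert_\infty\le C\Big[1+\log\Big(2+\frac{\lVert\omega\rVert_{1,p}}{\lVert\omega\rVert_\infty}\Big)\Big]\lVert\omega\rVert_\infty .
\]
The gap is the regime $\lVert\omega\rVert_\infty<1$. No argument can close it: with $C$ depending only on $d,p$ (the lemma is stated for arbitrary $u\in W^{2,p}$), \eqref{Kato Inequality} is false as stated.

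To see this, push your own scaling observation to the limit. Apply \eqref{Kato Inequality} to $\lambda u$, divide by $\lambda$ and let $\lambda\to0^+$. This gives $\lVert\nabla u\rVert_\infty\le C(1+\log 2)\lVert\omega\rVert_\infty$ for every $u\in W^{2,p}$, i.e.\ $L^\infty$-boundedness of the Calder\'on--Zygmund operator $\omega\mapsto\nabla u$. That fails: mollifications $\omega_\epsilon$ of $\operatorname{sgn}(x_1)\operatorname{sgn}(x_2)$ at scale $\epsilon$ satisfy $\lVert\omega_\epsilon\rVert_\infty\le1$ but $\lvert\nabla u_\epsilon(0)\rvert\gtrsim\log(1/\epsilon)$.

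Your closing appeal to the literature does not repair this, for three reasons:
\begin{enumerate}
\item The lemma is a static statement about a single field, so no conservation law can enter it.
\item Along the paper's Boussinesq, IIE and MHD flows, $\omega$ is forced. So $\lVert\omega(t)\rVert_\infty$ is not conserved and has no lower bound (take $\omega_0=0$).
\item Correct static versions of this estimate handle the scaling with an additive constant, or with the ratio $\lVert\omega\rVert_{1,p}/\lVert\omega\rVert_\infty$ inside the logarithm. They do not use a lower bound on $\lVert\omega\rVert_\infty$.
\end{enumerate}

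\textbf{What to do instead.} Conclude that the statement must be corrected; your computation already proves a usable correction. In your third case, the choice $\varepsilon^\alpha=(2+\lVert\omega\rVert_{1,p})^{-1}$ makes the near term at most an absolute constant. Stop there rather than trying to make it $\lesssim\lVert\omega\rVert_\infty$, and you get
\[
\lVert\nabla u\rVert_\infty\le C\Big(1+\big[1+\log(2+\lVert\omega\rVert_{1,p})\big]\lVert\omega\rVert_\infty\Big).
\]
This also follows from the scale-invariant bound, using $s\log(1/s)\le e^{-1}$ for $s=\lVert\omega\rVert_\infty<1$.

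This form is enough for the lifespan arguments. There $\lVert\omega\rVert_\infty$ is only bounded from above, and the extra constant is absorbed into factors such as $1+\delta Z$. In the continuation criterion, however, the same argument then controls $M$ through $\int_0^T(1+\lVert\omega\rVert_\infty)\,dt$ rather than $\int_0^T\lVert\omega\rVert_\infty\,dt$.
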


\section{Lifespan Estimates of Hydrodynamic Models}

In this section, we will derive lower bound estimates of lifespan of  three hydrodynamic models in $2$-d. More precisely, we will consider Boussinesq, MHD and IIE in near Euler regime. That is, the initial data $\rho_0$ is close to $1$ in an appropriate sense. 

Our strategy is to use a bootstrap procedure to handle the small parameter measuring the inhomogeneity. To illustrate the idea, we first quickly sketch a Lagrangian proof of Beale-Kato-Majda for $2$-d incompressible Euler. Since $E$ is independent of $\rho$, apply Lemma \ref{W1p Estimate for Vorticity} with $K_p=K_0=0$ and Lemma \ref{Logarithmic Sobolev}, we obtain:
\begin{equation*}
  \lVert\omega(t,\cdot)\rVert_{1,p}\lesssim 1+M,\quad  \frac{\dot M}{M}\lesssim [1+\log(2+ \lVert\omega\rVert_{1,p})]\lVert\omega(t,\cdot)\rVert_\infty
\end{equation*}
Therefore, we obtain 
\begin{equation*}
 \dot M\lesssim [1+M+(1+M)\log(1+M)]\lVert\omega(t,\cdot)\rVert_\infty.   
\end{equation*}
Let $\tilde M=1+\log(1+M)$, we have:
\begin{equation*}
 \frac{d}{dt}\tilde M\lesssim  \tilde M\lVert \omega(t,\cdot)\rVert_\infty,\quad \tilde M\le \exp\bigg(C\int_0^t \lVert\omega(\tau,\cdot)\rVert_\infty d\tau\bigg). 
\end{equation*}
Hence, we conclude
\begin{equation*}
  M(t)\lesssim   \exp\exp\bigg(C\int_0^t \lVert\omega(\tau,\cdot)\rVert_\infty d\tau\bigg).
\end{equation*}
 The double exponential growth of $M$ measures the Lagrangian stretching of flow $X$. Here, since the vorticity $\omega$ is transported, conservation of $\lVert\omega(t,\cdot)\rVert_\infty$ implies global well-posedness.

Now, for $2$-d fluid models with transported scalar $\rho$ in close-to-Euler regime, the source term $K_p$, $K_0$ appears in \eqref{1,p regularity of omega} with a small factor $\delta$. Bounding $K_0$ and $K_p$ usually requires higher norm estimate of $\rho$, which is controlled by $N$. On the other hand, we denote by
\begin{equation*}
 Q=\int_0^t (M+N)K_0+M^2 K_p d\tau,   
\end{equation*}
which is the Duhamel memory term appears in \eqref{1,p regularity of omega}. Then, assume that the following Calderon-Zygmund type estimate holds:
\begin{equation*}
 \lVert\nabla u\rVert_{1,p}\lesssim \lVert\omega\rVert_{1,p}.   
\end{equation*}
As a consequence, we have:
\begin{equation*}
 \dot N=\lVert\nabla u\rVert_{1,p}N\lesssim \lVert  \omega\rVert_{1,p} N\lesssim (1+M+\delta MQ)N.
\end{equation*}
Hence, suppose that $\delta Q(t)\le 1$ in a sufficiently small time interval $[0, T_\delta]$, then we may control $N$ via a triple exponential. Such a bound feeds back into the growth estimate of $Q$, which also turns out to be triple exponential in the models we study and suggests a triple-log bound for $T_\delta$. We rigorously realize the above sketch of idea via a bootstrap argument. 

\subsection{A Growth Lemma and Triple-Log Lifespan Estimate}

The main goal of this subsection is to carry out a hierarchical growth lemma and an abstract bootstrap closure lemma. These lemmas are very elementary, but they successfully capture the growth of norm and lifespan estimates in various fluid models in a unified way, which turns out to be powerful.

Let us first state the abstract bootstrap lemma (cf. \cite{Tao2006NonlinearDispersive}), which we will use throughout this section.

\begin{lem}\label{Bootstrap Lemma}
    Let $I$ be an interval of time. Suppose we have one hypothesis $\mathscr H(t)$ and one outcome $\mathscr O(t)$ defined for each $t\in I$, then $\mathscr O(t)$ holds true for all $t\in I$ provided the validity of the following statements:
    \begin{itemize}
        \item [(I)] Suppose $\mathscr H(t)$ is true at $t\in I$, then $\mathscr O(t)$ is true.

        \item [(II)] Suppose $\mathscr O(t)$ holds for $t\in I$, then there exists a neighborhood $I_t\owns t$ such that $\mathscr H(t^\prime)$ is true for all $t^\prime\in I_t$.

        \item [(III)] There exists a $t_*\in I$ such that $\mathscr H(t_*)$ is true.

        \item [(IV)] Assume $\{t_n\}_{n=1}^\infty$ is a sequence of time converging to $\tilde t$ such that $\mathscr O(t_j)$ is true for all $j$, then $\mathscr O(\tilde t)$ is true.
    \end{itemize}
\end{lem}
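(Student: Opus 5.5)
We argue by a connectedness (continuous induction) argument on the interval $I$. Let $S=\{t\in I:\ \mathscr O(t)\text{ holds}\}$. The goal is $S=I$. Since an interval is connected, it suffices to show that $S$ is nonempty, relatively open in $I$, and relatively closed in $I$.

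\emph{Nonempty.} By (III) there is $t_*\in I$ with $\mathscr H(t_*)$ true. Then (I) gives $\mathscr O(t_*)$, so $t_*\in S$.

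\emph{Open.} Let $t\in S$. By (II) there is a neighborhood $I_t\ni t$, which we may take relatively open in $I$, on which $\mathscr H(t')$ holds for every $t'\in I_t$. Applying (I) pointwise gives $\mathscr O(t')$ for all $t'\in I_t$. Hence $I_t\subset S$, and $S$ is relatively open.

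\emph{Closed.} Let $\tilde t\in I$ be a limit of a sequence $t_n\in S$. Then (IV) gives $\mathscr O(\tilde t)$, so $\tilde t\in S$. Since $I$ is a subset of $\mathbb R$, hence metric, sequential closedness is the same as relative closedness.

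A nonempty subset of the connected set $I$ that is both relatively open and relatively closed must equal $I$. Therefore $\mathscr O(t)$ holds for every $t\in I$.

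The only point needing care is the openness step. Hypothesis (II) produces $\mathscr H$, not $\mathscr O$, on the neighborhood, so one must pass through (I) again. The neighborhood should also be understood relative to $I$, which handles endpoints of $I$ automatically. If one prefers to avoid the connectedness argument, an equivalent route is to take $t_*$ and set $t^\sharp=\sup\{s\ge t_*:\ [t_*,s]\subset S\}$. Closedness puts $t^\sharp$ in $S$, and openness then contradicts maximality unless $t^\sharp$ is the right endpoint of $I$. The symmetric argument to the left of $t_*$ completes the proof.
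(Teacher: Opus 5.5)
Your proof is correct and follows the paper's argument: the set where $\mathscr O$ holds is nonempty by (III) and (I), open by (II) followed by (I), and closed by (IV), so it is all of the connected interval $I$. Your extra care about the relative topology on $I$, and the alternative supremum argument, are sound refinements of the paper's proof rather than a different route.
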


\begin{proof}[Proof of Lemma~\ref{Bootstrap Lemma}]

Let $I_{\mathscr O}\subset I$ be the interval where $\mathscr O(t)$ holds true. Condition (IV) guarantees that $I_{\mathscr O}$ is closed, (I) and (III) guarantees that $I_{\mathscr O}$ is nonempty, while (I) and (II) guarantees that $I_{\mathscr O}$ is open. Hence, we conclude $I_{\mathscr O}=I$. 
    
\end{proof}

We now state our hierarchical growth lemma. 
\begin{lem}\label{Hierarchical Growth of Norms for Bou and IIE}
  Let $M,N,Q: [0,T]\to \mathbb R_+$ be absolutely continuous non-decreasing functions, such that
  \begin{equation*}
  M(0)=1, \quad N(0)=1,\quad Q(0)=0.    
  \end{equation*}
  Suppose that there exists an interval $[0,T_*]$ and a constant $C>e$ such that the following holds on $[0,T_*]$:
  \begin{equation}\label{Differential inequality of MYZ}
   \dot M\le CM(1+\log(1+\frac{\dot N}{N})),\quad \dot N\le CN(1+M),  
  \end{equation}
  and
  \begin{equation}\label{Differential inequality of Duhamel Term}
    \dot Q\le C(\dot M+N\dot M/M)+M\dot M\dot N/N +C(M+N). 
  \end{equation}
  Then, there exists positive constants $\Lambda_1,\Lambda_2,\Lambda_3,\Lambda_4$ such that
  \begin{equation}\label{Growth of controlling norms}
  M(t)\le \exp(e^{\Lambda_1 t}),\quad N(t)\le \exp(\Lambda_2\exp(e^{\Lambda_1t})),\quad Q(t)\le \Lambda_4 \exp(\Lambda_3\exp( e^{\Lambda_1t})) .  
  \end{equation}
  
\end{lem}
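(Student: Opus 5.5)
The plan is to integrate the three differential inequalities in sequence: first $M$, then $N$, then $Q$, each time feeding the previous bound into the next.

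\emph{Step 1: the bound for $M$.} Substitute the second inequality of \eqref{Differential inequality of MYZ} into the first. Since $\dot N/N\le C(1+M)$ and $\log$ is increasing,
\begin{equation*}
\dot M\le CM\bigl(1+\log(1+C(1+M))\bigr)\le C' M\bigl(1+\log M\bigr),
\end{equation*}
where we use $M\ge M(0)=1$ to absorb constants, so that $1+C(1+M)\le C''M$. Set $\tilde M=1+\log M$. Then $\dot{\tilde M}=\dot M/M\le C'\tilde M$, and Gr\"onwall with $\tilde M(0)=1$ gives $\tilde M\le e^{C't}$. Hence $M\le \exp(e^{C't})$, which is the first bound with $\Lambda_1=C'$.

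\emph{Step 2: the bound for $N$.} Write $(\log N)'\le C(1+M)\le 2C\exp(e^{\Lambda_1 t})$ and integrate from $0$:
\begin{equation*}
\log N(t)\le 2C\int_0^t \exp(e^{\Lambda_1\tau})\,d\tau\le 2Ct\exp(e^{\Lambda_1 t}).
\end{equation*}
The stated form has no prefactor $t$, so we remove it. Since $t\le e^{\Lambda_1 t}/\Lambda_1$ and $e^{\Lambda_1 t}\le \exp(e^{\Lambda_1 t})$, we get $t\exp(e^{\Lambda_1t})\le \Lambda_1^{-1}\exp(2e^{\Lambda_1 t})$. We then enlarge $\Lambda_1$ (replacing it by $2\Lambda_1$ throughout; the $M$ bound remains valid) and choose $\Lambda_2$ accordingly.

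\emph{Step 3: the bound for $Q$.} This is the step with the most bookkeeping. Integrate \eqref{Differential inequality of Duhamel Term} term by term, using monotonicity: $\dot M,\dot N\ge0$ almost everywhere and $M,N\ge1$.
\begin{itemize}
\item The term $\int C\dot M$ gives $CM$.
\item For $\int N\dot M/M$, bound $N\le N(t)$ and $\int\dot M/M=\log M$, giving $N(t)\log M(t)$.
\item For $\int M\dot M\dot N/N$ we have a product of two derivatives. Bound $\dot N/N\le C(1+M)\le 2CM$, which gives $2C\int M^2\dot M\le CM(t)^3$.
\item The term $\int C(M+N)$ gives at most $Ct(M(t)+N(t))$.
\end{itemize}
All of these are bounded by $\Lambda_4\exp(\Lambda_3\exp(e^{\Lambda_1t}))$. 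The dominant terms are $N\log M$ and $tN$, each of which is at most $N(t)\,e^{\Lambda_1 t}$ times a constant. The polynomial and exponential factors in $t$ and $M$ are absorbed by enlarging $\Lambda_3$ beyond $\Lambda_2$ (for instance $\Lambda_3=\Lambda_2+3$) and using $e^{\Lambda_1t}\le \exp(e^{\Lambda_1 t})$. Absolute continuity justifies the fundamental theorem of calculus in each integration.

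The main obstacle is the mixed term $M\dot M\dot N/N$, because it cannot be integrated directly. The plan handles it by replacing $\dot N/N$ with its a priori bound from \eqref{Differential inequality of MYZ}. If that loses too much, a fallback is to bound $\dot M\le CM(1+\log(1+C(1+M)))$ pointwise instead, which also yields a double-exponential quantity dominated by the triple exponential.
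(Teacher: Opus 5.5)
Your proposal is correct and follows essentially the same route as the paper. You substitute $\dot N/N\le C(1+M)$ into the $M$-inequality and pass to a logarithmic variable to get the double exponential, then integrate $(\log N)'\le C(1+M)$, and finally integrate the $Q$-inequality term by term, treating $M\dot M\dot N/N$ through $\dot N/N\lesssim 1+M$ to get an $M^3$-type bound. Your bookkeeping is, if anything, slightly more careful than the paper's in two places: your variable $1+\log M$ really starts at $1$, whereas the paper's $1+\log(1+M)$ starts at $1+\log 2$; and you handle the factor $t$ in the $N$ and $Q$ integrals explicitly, which the paper passes over.
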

\begin{proof}[Proof of Lemma~\ref{Hierarchical Growth of Norms for Bou and IIE}]
 The proof is straightforward and clean. By \eqref{Differential inequality of MYZ}, we have for $\tilde C:=1+\log(1+C)$:
 \begin{equation}\label{Double EXP ODE for M}
  \dot M\le \tilde C(1+M)(1+\log (1+M)) . 
 \end{equation}
We define $\mathfrak M:=\log(1+M)+1$, then we have $\mathfrak M(0)=1$. By \eqref{Double EXP ODE for M}:
\begin{equation*}
 \dot{\mathfrak M}\le \tilde C\mathfrak M\quad\Longrightarrow\quad \mathfrak M(t)\le \exp(\tilde Ct)  \quad\Longrightarrow \quad M(t)\le e^{-1}\exp(e^{\tilde Ct})-1\le\exp(e^{\tilde Ct}).
\end{equation*}
Which automatically gives:
 \begin{equation*}
   N\le \exp\bigg( C\int_0^t[1+M(\tau)]d\tau\bigg)\le  \exp[C\exp(\exp(\tilde Ct))].
 \end{equation*}
 Now, it suffices to estimate $Q$. Notice:
 \begin{equation*}
 Q(t)\le C\int_0^t(\dot M+N\dot M/M) d\tau+\int_0^t M\dot M\dot N/N d\tau+C\int_0^t(M+N) d\tau   
 \end{equation*}
 \begin{equation*}
  \le    CM(t)+CN(t) \log M(t)+ CM(t)\int_0^t \dot (1+M(\tau))\dot M(\tau)d\tau+C\int_0^t(M+N)d\tau
 \end{equation*}
 \begin{equation*}
  \le  C\exp(\exp(\tilde Ct))+C\exp[2C\exp(\exp(\tilde Ct))] + \exp(3\exp(\tilde Ct))
 \end{equation*}
 \begin{equation*}
   +C\exp(\exp(\tilde Ct))+C\exp[C\exp(\exp(\tilde Ct))]  
 \end{equation*}
 \begin{equation*}
  \le    5C\exp[2C\exp(\exp(\tilde Ct))].
 \end{equation*}
 Here, we repeatedly used the increasing assumption on $M$ and $N$, also we used the bound $C>e$. Hence, the desired claim \eqref{Growth of controlling norms} is demonstrated, with
 \begin{equation*}
   \Lambda_1=\tilde C,\quad\Lambda_2=C,\quad \Lambda_3=2C\quad\text{and}\quad\Lambda_4=5C.  
 \end{equation*}
\end{proof}

The above growth lemma actually encodes the triple exponential growth of $\lVert\nabla X\rVert_{1,p}$ ($N$ variable in the lemma) for $2$-dimensional Boussinesq and inhomogeneous Euler equation for short time. To quantify such a statement, we introduce the following bootstrap closure lemma.
\begin{lem}\label{Bootstrap Closure Lemma}
 Let $I=[0,T]$ and $\delta>0$, assume that $\{\mathcal F_j\}_{j=1}^m: I\to [0,+\infty)$ are continuous increasing functions with $\mathcal F_j(0)\le 1$ holding for all $1\le j\le m$, and we assume $\{\kappa_j\}_{j=1}^m$ is a sequence of positive real numbers. Let
 \begin{equation*}
  \mathcal E(t):=\exp(C_1\exp(C_2 e^{C_3t})),\quad C_1,C_2,C_3>0.   
 \end{equation*}
 Assume that there exists $C_1,C_2,C_3>0$ and sequence $\{\zeta_j\}_{j=1}^m \subset [1,+\infty)$ such that the following implication holds: For some $0<t\le T$:
 \begin{equation}\label{Bootstrap Implication}
\kappa_j\delta \mathcal F_j(t)\le 1,\quad\forall 1\le j\le m \quad\Longrightarrow\quad   \mathcal F_j(s)\le \zeta_j\mathcal E(s), \quad\forall 1\le j\le m, s\in [0,t].
 \end{equation}

 Then, for some small threshold 
 \begin{equation}\label{Delta0 Definition}
  \delta_0=C_0\exp(-C_1 e^{2C_2})  
 \end{equation}
 and $\delta<\delta_0$, we have: 
 \begin{equation*}
  \kappa_j\delta \mathcal F_j(t)\le 1  \quad\text{for all }1\le j\le m , t\in[0,T_\delta], 
 \end{equation*}
 where
 \begin{equation}\label{Triple Log Lifespan}
  T_\delta=C_3^{-1}\log[C_2^{-1}\log(C_1^{-1}\log(C_0\delta^{-1}))],\quad C_0:=\frac{99}{100\max_{1\le j\le m}  \kappa_j \zeta_j}.
 \end{equation}
 As a matter of fact, we have:
 \begin{equation}\label{Finiteness on Bootstrap interval}
   \mathcal F_j(t)  \le \zeta_j C_0\delta^{-1}\quad\text{for all }1\le j\le m, t\in[0,T_\delta].
 \end{equation}
\end{lem}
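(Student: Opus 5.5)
The plan is a continuity argument packaged through Lemma~\ref{Bootstrap Lemma} on the interval $I=[0,T_\delta]$ (intersected with $[0,T]$).

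\textbf{Choice of hypothesis and outcome.} I take the hypothesis $\mathscr H(t)$ to be ``$\kappa_j\delta\mathcal F_j(s)\le 1$ for all $j$ and all $s\in[0,t]$''. I take the outcome $\mathscr O(t)$ to be the strictly stronger statement ``$\mathcal F_j(s)\le \zeta_j C_0\delta^{-1}$ for all $j$ and all $s\in[0,t]$''. Since $\kappa_j\zeta_jC_0\le 99/100$, the outcome gives $\kappa_j\delta\mathcal F_j(s)\le 99/100<1$, so it is indeed stronger. Because each $\mathcal F_j$ is increasing, conditioning on all of $[0,t]$ is the same as conditioning at time $t$. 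This matches the form of the implication \eqref{Bootstrap Implication}.

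\textbf{Step (I): the key computation.} Assume $\mathscr H(t)$ with $t\le T_\delta$. Then \eqref{Bootstrap Implication} gives $\mathcal F_j(s)\le\zeta_j\mathcal E(s)\le \zeta_j\mathcal E(T_\delta)$, using monotonicity of $\mathcal E$. Unwinding the definition \eqref{Triple Log Lifespan} exactly:
\begin{itemize}
\item $C_3T_\delta=\log[C_2^{-1}\log(C_1^{-1}\log(C_0\delta^{-1}))]$, so $e^{C_3T_\delta}=C_2^{-1}\log(C_1^{-1}\log(C_0\delta^{-1}))$;
\item multiplying by $C_2$ and exponentiating gives $C_1^{-1}\log(C_0\delta^{-1})$;
\item multiplying by $C_1$ and exponentiating gives $\mathcal E(T_\delta)=C_0\delta^{-1}$.
\end{itemize}
Hence $\mathcal F_j(s)\le\zeta_jC_0\delta^{-1}$ for $s\le t$, which is $\mathscr O(t)$. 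This identity is the whole content of \eqref{Finiteness on Bootstrap interval}.

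\textbf{Role of $\delta_0$.} The smallness $\delta<\delta_0=C_0\exp(-C_1e^{2C_2})$ is only needed so that $T_\delta$ is well defined and positive. It gives $\log(C_0\delta^{-1})>C_1e^{2C_2}$, so $C_1^{-1}\log(C_0\delta^{-1})>e^{2C_2}$. Taking logs and dividing by $C_2$ gives a quantity $>2>1$, so the outer logarithm is positive.

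\textbf{Step (II): openness.} If $\mathscr O(t)$ holds, then $\kappa_j\delta\mathcal F_j(t)\le 99/100$. By continuity of the finitely many $\mathcal F_j$, the strict inequality $\kappa_j\delta\mathcal F_j<1$ persists on a neighborhood of $t$. For $t'$ slightly larger than $t$ this gives $\mathscr H(t')$. For $t'<t$, $\mathscr H(t')$ follows from monotonicity.

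\textbf{Step (III): a starting time.} At $t_*=0$, $\mathscr H(0)$ holds because $\mathcal F_j(0)\le1$ and $\kappa_j\delta\le\kappa_j\delta_0\le 99/(100\zeta_j)\cdot e^{-C_1e^{2C_2}}<1$. Here I use $\zeta_j\ge1$; note that $C_0\kappa_j\le 99/100$ as well.

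\textbf{Step (IV): closedness.} This follows from continuity of $\mathcal F_j$, since the outcome is a non-strict inequality.

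\textbf{Main obstacle.} The only subtle point is the interaction of (I) with $t\le T_\delta$. The implication \eqref{Bootstrap Implication} is applied only at times within $[0,T_\delta]$, where $\mathcal E\le C_0\delta^{-1}$. The gap between $99/100$ and $1$ is exactly what makes (II) work, so Lemma~\ref{Bootstrap Lemma} yields $\mathscr O$, and hence the claims, on all of $[0,T_\delta]$.
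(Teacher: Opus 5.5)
Your proposal is correct and follows the paper's argument: the same hypothesis $\kappa_j\delta\mathcal F_j\le 1$, the exact identity $\mathcal E(T_\delta)=C_0\delta^{-1}$, and the $99/100$ margin feeding the openness condition of Lemma~\ref{Bootstrap Lemma}. Your one deviation, taking the outcome to be $\mathcal F_j\le \zeta_j C_0\delta^{-1}$ instead of the paper's $\mathcal F_j\le\zeta_j\mathcal E$, is harmless and makes the openness step cleaner. It avoids the paper's appeal to $\mathcal F_j(T_\delta)\le\zeta_j\mathcal E(T_\delta)$, which is not directly supplied by $\mathscr O(t)$; the paper should instead use $\mathcal F_j(t)\le\zeta_j\mathcal E(t)\le\zeta_j\mathcal E(T_\delta)$.
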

\begin{proof}[Proof of Lemma~\ref{Bootstrap Closure Lemma}]
  We consider the following natural bootstrap assumption $\mathscr H(t)$: $\mathscr H(t)$ is said to hold if we have:
  \begin{equation*}
   \kappa_j\delta \mathcal F_j(t)\le 1,\quad\forall 1\le j\le m .   
  \end{equation*}
 The associated bootstrap outcome $\mathscr O(t)$ is selected to be
 \begin{equation*}
    \mathcal F_j(s)\le \zeta_j\mathcal E(s), \quad\forall 1\le j\le m, s\in [0,t].  
 \end{equation*}
 We now check the four conditions in Lemma \ref{Bootstrap Lemma}. $\mathscr H(t)$ implies $\mathscr O(t)$ is our assumption. $\mathscr H(0)$ holds since 
 \begin{equation*}
  \kappa_j\delta<\kappa_j \delta_0=\frac{99\kappa_j}{100\max_{1\le j\le m}\kappa_j\zeta_j}\exp(-C_1e^{2C_2})\le \frac{99\kappa_j}{100 \max_{1\le j\le m}\kappa_j}\le 1,\quad \text{for all }1\le j\le m.   
 \end{equation*}
 The closedness condition (IV) holds trivially for $\mathscr O$, and it suffices to check the feedback condition (II).

 Now, for any $t\in [0,T_\delta]$, we first observe that $T_\delta$ is decreasing in $\delta$ provided $T_\delta$ is well-defined. In particular, $T_{\delta}\ge T_{\delta_0}$. Now we compute:
 \begin{equation*}
 T_{\delta_0}=C_3^{-1}\log[C_2^{-1}\log(C_1^{-1}\log(C_0\delta_0^{-1}))] =C_3^{-1} \log 2>0.  
 \end{equation*}
 On the other hand, using the increasing assumption on $\mathcal F_j$, we have for all $t\in [0,T_\delta]$:
 \begin{equation}
 \delta \kappa_j\mathcal F_j(t) \le \delta \kappa_j\mathcal F_j(T_{\delta})\le \delta\kappa_j\zeta_j \mathcal E(T_\delta)=\delta\kappa_j\zeta_j \cdot C_0\delta^{-1} \le \frac{99}{100}  
 \end{equation}
Hence, there must exists $I_t\owns t$ such that $\mathscr H(t)$ holds true on $I_t$. In particular, this bootstrap closes on $[0,T_\delta]$ with $T_\delta\ge T_{\delta_0}= C_3^{-1}\log 2>0$.
\end{proof}

\begin{rmk}
  Our approach fails in $3$-d cases due to the vortex stretching effect. From a Lagrangian viewpoint, the $L^p$ norm of vorticity is not conserved, but stretched by the Jacobian matrix $\nabla X$ of the flow map instead. As a consequence, we will not be able to control $N$ exponentially in $M$. Instead, the presence of quadratic term in non-perturbative part of the differential inequality will destroy the bootstrap argument. 
  
  On the other hand, one should not expect any asymptotic global existence result in $3$-d for Boussinesq or Inhomogeneous Euler, since the global existence of incompressible Euler in $\mathbb T^3$ is not known. So a natural direction in this setting should be trying to develop an asymptotic lifespan comparison type result.
\end{rmk}

\subsection{Boussinesq Case}

We now establish the long-time existence of Boussinesq equation in $2$-d in the close-to-Euler regime. More precisely, we fix $p>2$ and define $\delta:=\lVert \rho_0-1\rVert_{2,p}$ which measures the inhomogeneity of initial data.

The main theorem of this subsection is:
\begin{thm}\label{Lifespan of Boussinesq} There exist constants $\delta_0$ and $C_0^*,C_1^*, C_2^*, C_3^*>0$ possibly depending on the initial data $\omega_0$, such that: For any small positive number $\delta<\delta_0$, the quantity 
\begin{equation*}
\mathsf E(t):=\lVert\omega\rVert_{1,p}+\lVert\rho\rVert_{2,p}   
\end{equation*} 
remains bounded on the interval $[0,T_\delta]$, where
\begin{equation}\label{Triple Log Lower bound for Lifespan}
 T_\delta=C_3^*\log [C_2^*\log (C_1^*\log( C_0^*\delta^{-1}))]   .
\end{equation}
\end{thm}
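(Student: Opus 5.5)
For the Boussinesq energy $E=E_{\text{Bou}}$ we have $\partial_\rho E=-x_2$. Hence $\nabla\partial_\rho E=-e_2$ is constant and $\nabla^2\partial_\rho E=0$, which gives $K_0=1$ and $K_p=0$ in Lemma~\ref{W1p Estimate for Vorticity}. The plan is to run the bootstrap of Lemma~\ref{Bootstrap Closure Lemma} with the Duhamel term $Q(t)=\int_0^t(M+N)\,d\tau$ as the controlled quantity. The closure hypothesis will be $\delta Q(t)\le 1$, together with $\delta M,\delta N\le 1$ if needed. Under this hypothesis Lemma~\ref{W1p Estimate for Vorticity} gives
\begin{equation*}
\lVert\omega\rVert_{1,p}\lesssim 1+M+\delta MQ\lesssim 1+M,\qquad \lVert\rho\rVert_{2,p}\lesssim 1+\delta(M+N+M^2).
\end{equation*}
The velocity is the Euler Biot--Savart velocity $u=\mathcal K\omega$, so Calder\'on--Zygmund gives $\lVert\nabla u\rVert_{1,p}\lesssim\lVert\omega\rVert_{1,p}$. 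Therefore $\dot N= C\lVert\nabla u\rVert_{1,p}N\le CN(1+M)$.

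Next I control $M$. From the Lagrangian formula \eqref{Lagrangian Formulae of Vorticity}, $\lVert\omega\rVert_\infty\le\lVert\omega_0\rVert_\infty+\lVert\nabla\rho_0\rVert_\infty\int_0^t\lVert\nabla X\rVert_\infty\,d\tau\lesssim 1+\delta\int_0^tM\,d\tau\lesssim 1+\delta Q\lesssim 1$ on the bootstrap interval. Combining this with Lemma~\ref{Logarithmic Sobolev} and $\lVert\omega\rVert_{1,p}\lesssim 1+M$ yields $\dot M\le CM\bigl(1+\log(1+M)\bigr)$. This is the double-exponential ODE appearing in the proof of Lemma~\ref{Hierarchical Growth of Norms for Bou and IIE}; it is slightly stronger than the hypothesis \eqref{Differential inequality of MYZ} there, but the same proof applies. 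Since $\dot Q=M+N$, inequality \eqref{Differential inequality of Duhamel Term} holds trivially. The growth lemma then gives $M\le \exp(e^{\Lambda_1t})$, and $N$ and $Q$ are bounded by $\Lambda_4\exp(\Lambda_3\exp(e^{\Lambda_1 t}))$, all of the triple-exponential form $\zeta_j\mathcal E(t)$.

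To close the argument I apply Lemma~\ref{Bootstrap Closure Lemma} with $\mathcal F_1=Q$, $\mathcal F_2=M$, $\mathcal F_3=N$. These are continuous and increasing with $\mathcal F_j(0)\le1$, and implication \eqref{Bootstrap Implication} is exactly what the previous paragraph establishes on $[0,t]$, since the hypothesis at time $t$ propagates to $s\le t$ by monotonicity. The lemma gives $\delta\mathcal F_j\le1$ on $[0,T_\delta]$, with $T_\delta$ of the triple-log form \eqref{Triple Log Lower bound for Lifespan}. On this interval $\lVert\omega\rVert_{1,p}\lesssim 1+M$ is finite, and $\lVert\rho\rVert_{2,p}\lesssim 1+\delta(M+N+M^2)$ is bounded as well. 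So $\mathsf E(t)$ stays bounded, which by standard local well-posedness continuation means the solution exists there.

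The main obstacle is making the bootstrap rigorous, in particular justifying that $M$, $N$, $Q$ are absolutely continuous and increasing on the maximal existence interval. I would handle this by working with the a priori smooth local solution and invoking continuation in $\mathsf E$. A secondary point is tracking the constant in front of $\delta MQ$ so that $\delta Q\le1$ genuinely absorbs it; the constant $\kappa_1$ in Lemma~\ref{Bootstrap Closure Lemma} is there for this purpose.
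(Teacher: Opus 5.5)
Your proposal is correct and follows essentially the paper's route. You use the same $K_0=1$, $K_p=0$ reduction, the same Duhamel quantity $\int_0^t(M+N)\,d\tau$ (the paper's $Y$) as the bootstrap variable, the hierarchical growth lemma, and Lemma~\ref{Bootstrap Closure Lemma}. The differences are cosmetic: you feed $\lVert\omega\rVert_{1,p}\lesssim 1+M$ directly into the logarithmic inequality instead of passing through $\dot N/N$, and you carry $M$ and $N$ as extra bootstrap quantities, which is unnecessary but harmless, whereas the paper takes $m=1$.
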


We will use an elementary Lagrangian approach to prove the result. The idea is to establish \textit{a priori} estimates of the $W^{1,\infty}$ and $W^{2,p}$ norm of the flow map $X$ of $u$, which leads to a system of perturbed differential inequalities and fits Lemma \ref{Hierarchical Growth of Norms for Bou and IIE}. The rest of the work boils down to check the conditions in Lemma \ref{Bootstrap Closure Lemma}.

\begin{proof}[Proof of Theorem~\ref{Lifespan of Boussinesq}]

We divide the proof into several steps.
\begin{itemize}
    \item \textbf{Step 1.} We first establish a system of differential inequalities given by the apriori estimates. Notice that in the Boussinesq case, we have 
    \begin{equation*}
     E(u,\rho)=\frac{1}{2}\lVert u\rVert_2^2 -\langle \rho,x_2\rangle,\quad \nabla\partial_\rho E=-e_2, \nabla^2\partial_\rho E=0   .
    \end{equation*}
    Hence, we obtain $K_0= 1$, $K_p=0$ in this case, and by Lemma~\ref{W1p Estimate for Vorticity} we have for some $C> 0$:
    \begin{equation*}
  \lVert\omega\rVert_{1,p}\le C+CM+CM\delta\int_0^ t (M+N)K_0 d\tau   .  
    \end{equation*}
    Using the Calderon-Zygmund estimate $\lVert\nabla u\rVert_{1,p}\le C\lVert\omega\rVert_{1,p}$, we conclude:
    \begin{equation*}
    \dot N\le CN (1+M+M\delta Y),\quad Y=\int_0^t (M+N) d\tau  . 
    \end{equation*}
    Meanwhile, by Lemma~\ref{Logarithmic Sobolev}, we have
    \begin{equation*}
     \dot M\le CM(1+\log(2+ \frac{\dot N}{N})) \lVert\omega\rVert_\infty .  
    \end{equation*}
    Plus, we have the following $L^\infty$ bound for $\omega$:
    \begin{equation*}
    \lVert\omega\rVert_\infty\le   C+C\delta Z,\quad Z=\int_0^t Md\tau  .
    \end{equation*}
    Hence, we end up with the following perturbed system of differential inequalities:
    \begin{equation}\label{Boussinesq Differential Inequalities}
\left\{
\begin{aligned}
&\dot M\le CM[1+\log(2+\frac{\dot N}{N})] (1+\delta Z),\\
&\dot N\le CN (1+M+M\delta Y),\\
&  \dot Y=M+N,\quad \dot Z=M,\quad Y_0=Z_0=0.
\end{aligned}
\right.
\end{equation}
So far, we have not used any smallness assumption on $\delta$, and the above system of  differential inequality with memory holds unconditionally. Moreover, if we assume that $\delta=0$, we will obtain the double exponential growth of $M$ and triple exponential growth of $N$ as a consequence. 

\item \textbf{Step 2.} We now establish our bootstrap procedure towards the proof. We propose the following bootstrap assumption $\mathscr H(t)$: We say $\mathscr H(t)$ holds, if 
\begin{equation}\label{Bootstrap Assumption}
 \delta Z(t)\le 1,\quad \delta Y(t)\le 1   
\end{equation}
 Since $M,N>0$, the above assumption reduces to $\delta Y(t)\le 1$.

Now, thanks to the increasing property of $Y$, we conclude that $\mathscr H(t)$  holds true for all $t\in [0,T_*]$, provided $\mathscr H(T_*)$ holds true. Therefore, $(M,N,Y)$ satisfy a relaxed system of inequality on $[0,T]$:
 \begin{equation*}
\left\{
\begin{aligned}
&\dot M\le 2CM(1+\log(2+\frac{\dot N}{N})) \\
&\dot N\le 2CN (1+M)\\
& \dot Y\le M+N,\quad Y(0)=0.
\end{aligned}
\right.
\end{equation*}
With out lose of generality, we set $C>e$. 

Apply Lemma \ref{Hierarchical Growth of Norms for Bou and IIE} with $Q=Y$, we conclude that there exists $\Lambda_1,\Lambda_2,\Lambda_3,\Lambda_4>0$ such that for all $t\in [0,T_\delta]$:
\begin{equation*}
M(t)\le \exp(e^{\Lambda_1t}),\quad N(t)\le \exp(\Lambda_2\exp(e^{\Lambda_1 t})),\quad Y(t)\le \Lambda_4\exp(\Lambda_3\exp(e^{\Lambda_1 t})).
\end{equation*}
The above growth bound is chosen as our bootstrap outcome $\mathscr O(t)$.

We already showed that $\mathscr H(t)$ implies $\mathscr O(t)$. Meanwhile, by absolute continuity of $Y$ and $Y(0)=0$, the condition (III) is readily verified. The closedness condition (IV) is obvious and it suffices to verify (II).

\textbf{Step 3.} We now complete the bootstrap procedure. As we already guarantee $\mathscr H(t)$ implies $\mathscr O(t)$ and $\mathscr H(t)$ implies for any $\mathscr H(s)$ with $s\in [0,t]$, we can apply Lemma \ref{Bootstrap Closure Lemma} with $m=1$, $\mathcal F_1(t)=Y(t)$, $\kappa_1=1$ and $\zeta_1=\Lambda_4$. Hence:
\begin{equation*}
C_0=\frac{99}{100 \Lambda_4},\quad \delta_0=C_0\exp(-C_3 e^{2}),\quad T_\delta=\Lambda_1^{-1}\log\log\Big[\Lambda_3\Big(\log \frac{99}{100\Lambda_4\delta}\Big)\Big].
\end{equation*}

 and for all $t\in [0,T_\delta]$, by \eqref{Finiteness on Bootstrap interval} we have
\begin{equation*}
\mathsf E(t)\le  C+CY+C\delta Y^2\lesssim 1+\delta^{-1}.
\end{equation*}
  Therefore, we conclude that $\mathsf E$ is finite on $[0,T_\delta]$ and the solution can be continued past time $T_\delta$, with desire triple-log lower bound \eqref{Triple Log Lower bound for Lifespan} of $T_\delta$.
\end{itemize}

\end{proof}

\subsection{The Two-Dimensional IIE}
Now we study the lifespan of inhomogeneous Euler equation in the near-Euler regime. We recall that IIE is known to be locally well-posed in H\"older and Hilbert Sobolev spaces (see \cite{BeiraoDaVeigaValli1978}\cite{BeiraoDaVeigaValli1980I}\cite{BeiraoDaVeigaValli1980II}\cite{VALLI198843}), and we refer the reader to \cite{Da06}\cite{[Da10]}\cite{[DF11]} for various well-posedness results in $L^p$-based Sobolev spaces and endpoint Besov spaces.

For convenience of our representation, we rephrase our setting as:
\begin{equation*}
 \mu_0=\rho_0^{-1}=1+\delta\theta_0,\quad \lVert\theta_0\rVert_{2,p}=1.   
\end{equation*}
By the transport nature of $\rho$, we have
\begin{equation*}
 \mu(t,x)=\rho^{-1}(t,x)-1=\delta\theta(t,x)=\delta\theta_0(X_t^{-1}(x)),\quad \dot X=u(X).   
\end{equation*}

Plugging $E=E_{\text{IIE}}$ in \eqref{Energy Vorticity}, we derive the energy-vorticity formulation of IIE:
\begin{equation}\label{IIE vorticity}
\left\{
\begin{aligned}
&\partial_t \omega+u\cdot\nabla \omega=\{\frac{1}{2}\lvert u\rvert^2,\rho\};\\
&\partial_t \rho+u\cdot\nabla\rho=0;\\
& u=\rho^{-1}\mathcal K\omega-\rho^{-1}\nabla\Delta_{\rho}^{-1}\nabla\cdot(\rho^{-1}\mathcal K\omega),\quad \mathcal K=\nabla^\perp\Delta^{-1}.
\end{aligned}
\right.
\end{equation}

Now, thanks to the perturbative representation of $\rho$, we use the div-free property of $\mathcal K\omega$ to derive
\begin{equation*}
u=\rho^{-1}\mathcal K\omega-\rho^{-1}\nabla\Delta_{\rho}^{-1}\nabla\cdot(\rho^{-1}\mathcal K\omega)   
\end{equation*}
\begin{equation*}
=  \mathcal K\omega-\mu\nabla\Delta_{\rho}^{-1}\nabla\cdot[(\mu-1)\mathcal K\omega]+(\mu-1)\mathcal K\omega  
\end{equation*}
\begin{equation}\label{closed form of u}
 = \mathcal K\omega-\delta(1+\delta \theta)\nabla\Delta_{\rho}^{-1}\nabla\cdot(\theta\mathcal K\omega)+\delta\theta \mathcal K\omega
\end{equation}

Let the mean-zero momentum variable $\eta=\rho u$ be such that $\eta=\nabla^\perp\psi+\nabla q$, we have:
\begin{equation}\label{Equation for Grad part of Momentum}
\Delta_\rho q=-\nabla^\perp\psi\cdot\nabla\mu=-\delta\nabla^\perp\psi\cdot\nabla \theta ,\quad \omega=\Delta\psi . 
\end{equation}
Notice that:
\begin{equation*}
\nabla^\perp\psi\cdot\nabla \theta =\nabla\cdot(\theta\nabla^\perp\psi)=\nabla\cdot(\theta\mathcal K\omega).   
\end{equation*}
Hence, by \eqref{Equation for Grad part of Momentum}, we can rewrite \eqref{closed form of u} as:
\begin{equation}\label{Compact perturbative formula for u}
 u=  \mathcal K\omega +\delta\theta \mathcal K\omega-(1+\delta\theta)\nabla q,\quad \Delta q+\delta\nabla\times( \theta \nabla^\perp q)=-\delta\nabla\cdot(\theta\mathcal K\omega).
\end{equation}

We now establish several estimates which will be useful in the sequel. The first lemma is an elementary estimate on $\rho_0$.
\begin{lem}\label{Rho 0 Estimate}
Assume $\delta<1$, then we have
\begin{equation}
 \lVert\nabla\rho_0\rVert_\infty\lesssim \delta,\quad \lVert \nabla^2\rho_0\rVert_p\lesssim\delta.
\end{equation}
\end{lem}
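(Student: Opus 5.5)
The plan is to write $\rho_0$ explicitly in terms of $\theta_0$ and differentiate. Recall that in this subsection $\mu_0=\rho_0^{-1}=1+\delta\theta_0$ with $\lVert\theta_0\rVert_{2,p}=1$, so that
\begin{equation*}
\rho_0=\frac{1}{1+\delta\theta_0}.
\end{equation*}
The first step is to control $\theta_0$ pointwise. Since $p>2$, the Sobolev embedding $W^{2,p}\hookrightarrow W^{1,\infty}$ gives $\lVert\theta_0\rVert_\infty+\lVert\nabla\theta_0\rVert_\infty\lesssim\lVert\theta_0\rVert_{2,p}=1$. The no-vacuum assumption \eqref{No Vaccum of rho} gives $C_\rho^{-1}\le\rho_0\le C_\rho$ directly, so $\rho_0^2\le C_\rho^2$. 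This avoids having to assume $\delta\lVert\theta_0\rVert_\infty<1/2$; alternatively, one could take $\delta$ small relative to the embedding constant.

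Next I differentiate. The chain rule gives
\begin{equation*}
\nabla\rho_0=-\delta\rho_0^2\nabla\theta_0,
\end{equation*}
and hence $\lVert\nabla\rho_0\rVert_\infty\le\delta C_\rho^2\lVert\nabla\theta_0\rVert_\infty\lesssim\delta$. Differentiating once more,
\begin{equation*}
\nabla^2\rho_0=-\delta\rho_0^2\nabla^2\theta_0+2\delta^2\rho_0^3\,\nabla\theta_0\otimes\nabla\theta_0 .
\end{equation*}
Taking $L^p$ norms on $\mathbb T^2$, which has finite measure, the first term is at most $\delta C_\rho^2\lVert\nabla^2\theta_0\rVert_p\le\delta C_\rho^2$. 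The second term is at most $2\delta^2C_\rho^3\lVert\nabla\theta_0\rVert_\infty^2|\mathbb T^2|^{1/p}\lesssim\delta^2\le\delta$, where the last inequality uses $\delta<1$. Combining the two bounds gives $\lVert\nabla^2\rho_0\rVert_p\lesssim\delta$, with constants depending only on $p$ and $C_\rho$, as allowed by the paper's convention.

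There is no genuine obstacle here. The only point needing care is the uniform lower bound on $1+\delta\theta_0$, which is exactly what \eqref{No Vaccum of rho} supplies, and the use of $\delta<1$ to absorb the quadratic $\delta^2$ term.
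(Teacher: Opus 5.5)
Your proof is correct and follows essentially the same route as the paper: write $\rho_0=(1+\delta\theta_0)^{-1}$, differentiate by the chain rule, bound the powers of $(1+\delta\theta_0)^{-1}=\rho_0$ by the no-vacuum constant $C_\rho$, control $\nabla\theta_0$ in $L^\infty$ (and $\nabla^2\theta_0$ in $L^p$) through $\lVert\theta_0\rVert_{2,p}=1$ and the embedding $W^{2,p}\hookrightarrow W^{1,\infty}$, and absorb the higher powers of $\delta$ using $\delta<1$. Writing the derivatives as $-\delta\rho_0^2\nabla\theta_0$ and $-\delta\rho_0^2\nabla^2\theta_0+2\delta^2\rho_0^3\nabla\theta_0\otimes\nabla\theta_0$ is simply tidier bookkeeping than the paper's quotient-rule expansion of $1-\delta\theta_0/(1+\delta\theta_0)$, and it avoids the small slips in the paper's displayed bounds (for example, $\lVert\theta_0\rVert_\infty$ where $\lVert\nabla\theta_0\rVert_\infty$ is meant).
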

\begin{proof}[Proof of Lemma~\ref{Rho 0 Estimate}]
 Since $\rho_0=(1/x)\circ \mu_0=(1+\delta\theta_0)^{-1}$, we have
 \begin{equation*}
 \rho_0=1-\frac{\delta\theta_0}{1+\delta\theta_0}.   
 \end{equation*}
 Differentiating $\rho_0$ gives:
 \begin{equation*}
  \lVert\nabla\rho_0\rVert_\infty\le \delta\bigg\lVert \frac{\nabla\theta_0}{1+\delta\theta_0}\bigg\rVert_\infty + \delta^2\bigg\lVert \frac{\theta_0\nabla\theta_0}{(1+\delta\theta_0)^2}\bigg\rVert_\infty 
 \end{equation*}
 \begin{equation*}
 \le \delta C_\rho\lVert \theta_0\rVert_\infty+\delta^2 C_\rho^2\lVert\theta_0\rVert_\infty\lVert\nabla\theta_0\rVert_\infty\lesssim \delta.    
 \end{equation*}
 Taking one more derivative, we obtain
 \begin{equation*}
 \lVert\nabla^2\rho_0\rVert_p \le  \delta\bigg\lVert \frac{\nabla^2\theta_0}{1+\delta\theta_0}\bigg\rVert_{p} +2\delta^2\bigg\lVert \frac{\nabla\theta_0\otimes\nabla\theta_0}{(1+\delta\theta_0)^2}\bigg\rVert_p +\delta^2\bigg\lVert \frac{\nabla\theta_0\otimes\nabla\theta_0+\theta_0\nabla^2\theta_0}{(1+\delta\theta_0)^2}\bigg\rVert_{p}+2\delta^3\bigg\lVert \frac{\theta_0\nabla\theta_0\otimes\nabla\theta_0}{(1+\delta\theta_0)^3}\bigg\rVert_{p}
 \end{equation*}
 \begin{equation*}
  \lesssim   \delta C_\rho\lVert\theta_0\rVert_{2,p}+2C_\rho^2\delta^2 \lVert\theta_0\rVert_{2,p}^2+C_\rho^2\delta^2(\lVert\theta_0\rVert_{2,p}^2+\lVert\theta_0\rVert_{2,p}^3)+2\delta^3 C_\rho^3 \lVert\theta_0\rVert_{2,p}^3\lesssim\delta.
 \end{equation*}
\end{proof}
The second lemma provides useful apriori estimates for $u$.
\begin{lem}\label{Estimates of u}
Let $M(t),N(t)\ge 1$ be defined as in \eqref{Streching Variable Definition}, then
 there is a numerical constant $C\ge 1$ which satisfies the following: For any fixed  $\delta<C^{-1}$, there exists a corresponding time $T_\delta^*>0$ such that for all $t\in [0,T_\delta]$ the following estimates hold:
\begin{equation}\label{W2p estimate for u}
 \lVert u\rVert_{2,p}\le  \frac{ (1+\delta N)\lVert\omega\rVert_{1,p}}{1-C\delta N}
\end{equation}
\begin{equation}\label{Linfty estimate for nabla u}
 \lVert\nabla u\rVert_\infty\lesssim   \Big(\delta M\lVert\omega\rVert_\infty+\frac{ \lVert\omega\rVert_\infty}{1-C\delta M}\Big)\Big[1+\log 
 \Big(2+\frac{ (1+\delta N)\lVert\omega\rVert_{1,p}}{1-C\delta N}\Big)\Big].
\end{equation}
\end{lem}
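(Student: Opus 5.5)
We plan to work directly from the perturbative representation \eqref{Compact perturbative formula for u}, namely
\begin{equation*}
u=\mathcal K\omega+\delta\theta\,\mathcal K\omega-(1+\delta\theta)\nabla q,\qquad \Delta q+\delta\nabla\times(\theta\nabla^\perp q)=-\delta\nabla\cdot(\theta\,\mathcal K\omega),
\end{equation*}
and to treat the elliptic equation for $q$ as a small perturbation of the Laplacian. The coefficient $\theta=\theta_0\circ A_t$ is controlled through Lemma~\ref{Regularity of Transport}. This gives $\lVert\nabla\theta\rVert_\infty\lesssim M$ and $\lVert\theta\rVert_{2,p}\lesssim 1+M+N$. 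The last bound comes from writing $\nabla^2(\theta_0\circ A)$ as a sum of $\nabla^2\theta_0(A)\nabla A\nabla A$ and $\nabla\theta_0(A)\nabla^2A$; after adjusting constants in the definition of $M,N$, it may be absorbed into a multiple of $N$. The time $T_\delta^*$ is then chosen as the first time at which $C\delta N$, respectively $C\delta M$, reaches $1/2$, say, so that all denominators stay positive.

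The first step is the $W^{2,p}$ estimate \eqref{W2p estimate for u}. We apply $\nabla^2$, or equivalently the Calder\'on--Zygmund estimate $\lVert\nabla^2\Delta^{-1}f\rVert_{1,p}\lesssim\lVert f\rVert_{1,p}$, to the $q$-equation, written as $q=-\delta\Delta^{-1}[\nabla\cdot(\theta\mathcal K\omega)+\nabla\times(\theta\nabla^\perp q)]$. Using that $W^{1,p}$ is an algebra, this yields
\begin{equation*}
\lVert\nabla q\rVert_{2,p}\le C\delta\lVert\theta\rVert_{2,p}\big(\lVert\mathcal K\omega\rVert_{2,p}+\lVert\nabla q\rVert_{2,p}\big).
\end{equation*}
Absorbing the $\nabla q$ term on the left while $C\delta N<1$ gives $\lVert\nabla q\rVert_{2,p}\lesssim \delta N\lVert\omega\rVert_{1,p}/(1-C\delta N)$. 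Substituting this into the formula for $u$, again with the algebra property and $\lVert\mathcal K\omega\rVert_{2,p}\lesssim\lVert\omega\rVert_{1,p}$, produces the factor $(1+\delta N)/(1-C\delta N)$. This is exactly \eqref{W2p estimate for u}, up to renaming constants.

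The second step is the Lipschitz bound \eqref{Linfty estimate for nabla u}. Here we would not use Lemma~\ref{Logarithmic Sobolev} on $u$ directly, since $u$ is not the Biot--Savart velocity of $\omega$. Instead we apply it to $v:=\mathcal K\omega$ and to the divergence-free field $\nabla^\perp$-part generated by $\nabla\times u$, with the logarithm controlled by the $W^{2,p}$ bound from the first step. Concretely, we expand
\begin{equation*}
\nabla u=(1+\delta\theta)\nabla v+\delta\nabla\theta\otimes v-(1+\delta\theta)\nabla^2 q-\delta\nabla\theta\otimes\nabla q .
\end{equation*}
The terms with $\nabla\theta$ are bounded by $\delta M(\lVert v\rVert_\infty+\lVert\nabla q\rVert_\infty)$. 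In turn, $\lVert v\rVert_\infty\lesssim\lVert\omega\rVert_\infty$ on $\mathbb T^2$, since $\mathcal K$ maps $L^\infty$ into $C^{0,\alpha}$. This yields the $\delta M\lVert\omega\rVert_\infty$ contribution. For $\nabla v$ we use Lemma~\ref{Logarithmic Sobolev}, giving $\lVert\omega\rVert_\infty[1+\log(2+\lVert\omega\rVert_{1,p})]$. For $\nabla^2q$ we use a log-Lipschitz/BMO estimate for the perturbed operator: $\nabla^2q$ solves the $q$-equation with right-hand side $\delta$ times terms involving $\nabla\theta$, for which $\lVert\nabla\theta\rVert_\infty\lesssim M$. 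Absorbing as before gives a factor $(1-C\delta M)^{-1}\lVert\omega\rVert_\infty$. The logarithm is then taken of the $W^{2,p}$ bound from the first step.

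The main obstacle is this last point. Closing the $L^\infty$ estimate for $\nabla^2 q$ requires an endpoint (BMO plus logarithmic interpolation) version of the Calder\'on--Zygmund absorption argument, where only $\lVert\nabla\theta\rVert_\infty\lesssim M$, rather than $W^{2,p}$ control, should enter the denominator. We would first try the Kozono--Taniuchi-type inequality $\lVert f\rVert_\infty\lesssim 1+\lVert f\rVert_{BMO}\log(2+\lVert f\rVert_{1,p})$ applied to $f=\nabla^2q$ and $f=\nabla v$. We would absorb in BMO using $\lVert\theta g\rVert_{BMO}\lesssim\lVert\theta\rVert_{W^{1,\infty}}\lVert g\rVert_{BMO}$ together with $\lVert\nabla\theta\rVert_\infty\lesssim M$. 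This is where the smallness of $\delta M$, via the restriction to $[0,T_\delta^*]$, is used.
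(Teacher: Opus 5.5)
Your first step is essentially the paper's. The paper runs the same perturbative Calder\'on--Zygmund absorption, phrased for $v=\nabla^\perp q$ through the Leray projector in $W^{k,p}$. You also note that $\lVert\theta\rVert_{2,p}$ really produces $M^2+N$, so the constant in $N$ has to be adjusted; the paper glosses over this.

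The second step takes a genuinely different and much heavier route, and the reason you give for it is mistaken. Lemma~\ref{Logarithmic Sobolev} does not need $u=\mathcal K\omega$. It applies to any divergence-free $W^{2,p}$ field, with $\omega$ replaced by that field's own curl, and the IIE velocity is divergence-free. The paper applies it to $u$ itself, so it only needs $\lVert\nabla\times u\rVert_\infty$ and $\lVert\nabla\times u\rVert_{1,p}\lesssim\lVert u\rVert_{2,p}$, the latter from step one.

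Taking the curl of $u=(1+\delta\theta)(\mathcal K\omega-\nabla q)$ removes $\nabla^2 q$ entirely:
\[
\nabla\times u=(1+\delta\theta)\omega+\delta\nabla^\perp\theta\cdot(\mathcal K\omega-\nabla q).
\]
The remaining $q$-term needs only $\lVert\nabla q\rVert_\infty\lesssim\lVert\nabla q\rVert_{1,p}\lesssim\delta M\lVert\omega\rVert_p/(1-C\delta M)$. This follows from the $k=1$ case of your own absorption argument with $\lVert\theta\rVert_{1,p}\lesssim M$, and it is exactly where the denominator $1-C\delta M$ in the statement comes from. So the ``main obstacle'' you flag, the $L^\infty$ bound on $\nabla^2 q$, is created by your choice not to apply the lemma to $u$.

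Your BMO plan does appear workable:
\begin{itemize}
\item Lipschitz multipliers satisfy $\lVert\theta g\rVert_{BMO}\lesssim\lVert\theta\rVert_{W^{1,\infty}}\lVert g\rVert_{BMO}$ for mean-zero $g$ on $\mathbb T^2$.
\item Riesz transforms act boundedly on $BMO$.
\item Kozono--Taniuchi closes using the bound on $\lVert\nabla^2 q\rVert_{1,p}$ from step one.
\end{itemize}
But it imports endpoint machinery the paper never needs. It also introduces an additive constant from Kozono--Taniuchi that the stated estimate does not contain. And it still relies on the $W^{1,p}$ bound for $\nabla q$, which you leave implicit. The paper's curl-based route avoids all of this by reusing Lemma~\ref{Logarithmic Sobolev} as a black box on the divergence-free field $u$.
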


\begin{proof}[Proof of Lemma~\ref{Estimates of u}]

We first consider the elliptic equation for $q$ in \eqref{Compact perturbative formula for u}. Let $v=\nabla^\perp q$, we apply Biot-Savart operator to the equation of $q$ to obtain
\begin{equation*}
 v+ \delta\bP(\theta v)= -\delta \bP(\theta\nabla\Delta^{-1}\omega). 
\end{equation*}
Hence, for any $k\ge 1$ and $p>2$, there exists a constant $C_1>0$ such that:
\begin{equation*}
\lVert v\rVert_{k,p}\le  \delta \lVert \bP(\theta v)\rVert_{k,p} +\delta  \lVert\bP(\theta\nabla\Delta^{-1}\omega)\rVert_{k,p}
\end{equation*}
\begin{equation*}
 \le  C_1 \delta  (\lVert \theta\rVert_{k,p}\lVert v\rVert_{k,p}+\lVert \theta\rVert_{k,p}\lVert \omega\rVert_{k-1,p}),
\end{equation*}
where we use the fact that $W^{k,p}$ is an algebra and $\bP$ is bounded on $W^{k,p}$. Therefore, for $\delta$ sufficiently small such that $C_1\delta\lVert\theta\rVert_{k,p}<1$, we have:
\begin{equation*}
\lVert v\rVert_{k,p}\le \frac{C_1\delta \lVert \theta\rVert_{k,p}\lVert \omega\rVert_{k-1,p}}{1-C_1\delta \lVert \theta\rVert_{k,p}}    .
\end{equation*}
Going back to the equation for $u$ in \eqref{Compact perturbative formula for u}, we have
\begin{equation*}
 \lVert u\rVert_{k,p}\le  (1+\delta \lVert\theta\rVert_{k,p})(\lVert\omega\rVert_{k-1,p}+\lVert v\rVert_{k,p})
\end{equation*}
\begin{equation}\label{u Wk,p Estimate}
 \le (1+\delta \lVert\theta\rVert_{k,p})\lVert\omega\rVert_{k-1,p}+(1+\delta \lVert\theta\rVert_{k,p})\frac{C_1\delta \lVert \theta\rVert_{k,p}\lVert \omega\rVert_{k-1,p}}{1-C_1\delta \lVert \theta\rVert_{k,p}}.   
\end{equation}
Meanwhile, taking curl on the $u$-equation in \eqref{Compact perturbative formula for u}, by Sobolev embedding $W^{1,p}\hookrightarrow L^\infty$ and compactness of $D$, there exists a constant $C_0>0$ such that
\begin{equation*}
 \lVert \nabla\times u\rVert_\infty\le (1+\delta\lVert\theta \rVert_{\infty})\lVert \omega\rVert_\infty+ C_0\delta\lVert \omega\rVert_{p}\lVert\theta\rVert_\infty +C\delta \lVert v\rVert_{1,p}\lVert \nabla\theta\rVert_\infty   
\end{equation*}
\begin{equation}\label{Linfty estimate of curl u}
 \le    (1+2\delta\lVert\theta \rVert_{\infty})\lVert \omega\rVert_\infty+ C_0\delta \frac{C_1\delta \lVert \theta\rVert_{1,p}\lVert \omega\rVert_{\infty}}{1-C_1\delta \lVert \theta\rVert_{1,p}}
\end{equation}
Now, classical Cauchy-Lipschitz theory implies that
\begin{equation}\label{transport estimate of theta}
 \lVert \theta(t,\cdot)\rVert_{2,p}\le N(t)\lVert \theta_0\rVert_{2,p}=N(t),\quad \lVert\theta\rVert_{1,p}\le M(t),\quad \lVert\nabla\theta\rVert_\infty\le M(t)\lVert\nabla\theta_0\rVert_\infty\le M(t).   
\end{equation}
Now, we set $t$ smaller if needed to guarantee $C_1\delta M(t)<1$, $C_1\delta N(t)<1$. Thanks to \eqref{transport estimate of theta}, we may control various norms of $\theta$ using $M$, $N$. Combining \eqref{u Wk,p Estimate} and \eqref{Linfty estimate of curl u} with \eqref{transport estimate of theta}, we have:
\begin{equation}\label{ u final W2,p estimate}
\lVert u\rVert_{2,p}\le \lVert\omega\rVert_{1,p}+\delta N\lVert\omega\rVert_{1,p}+(1+\delta N)\frac{C_1\delta N\lVert\omega\rVert_{1,p}}{1-C_1\delta N}=\frac{(1+\delta N)\lVert\omega\rVert
_{1,p}}{1-C_1\delta N}.   
\end{equation}
Moreover, we assume $\delta\le C_0^{-1}$ and obtain:
 \begin{equation}\label{ curl u final Linfty estimate}
\lVert \nabla\times u\rVert_\infty\le (1+2\delta M)\lVert\omega\rVert_\infty+C_0\delta\frac{C_1\delta M \lVert\omega\rVert_\infty}{1-C_1\delta M}\le 2\delta M \lVert\omega\rVert_\infty+\frac{\lVert\omega\rVert
_\infty}{1-C_1\delta M}  
\end{equation}

Meanwhile, the logarithmic Sobolev inequality \eqref{Logarithmic Sobolev} yields existence of constant $C_2$ such that the following holds:
\begin{equation*}
 \lVert \nabla u\rVert_\infty\lesssim \lVert\nabla\times u\rVert_\infty[1+\log (2+\lVert\nabla\times u\rVert_{1,p})]\le C_2   \lVert\nabla\times u\rVert_\infty[1+\log (2+\lVert u\rVert_{2,p})]
\end{equation*}
\begin{equation*}
 \le  \Big[2C_2\delta M\lVert\omega\rVert_\infty+\frac{C_2\lVert\omega\rVert_\infty}{1-C_1\delta M}\Big][1+\log (2+\lVert u\rVert_{2,p})].
\end{equation*}

Hence, we have
\begin{equation*}
 \frac{\dot M}{M}\le   \Big[2C_2\delta M\lVert\omega\rVert_\infty+\frac{C_2 \lVert\omega\rVert_\infty}{1-C_1\delta M}\Big]\Big(1+\log 
 \Big(2+\frac{\dot N}{N}\Big)\Big).
\end{equation*}
On the other hand, by \eqref{W2p estimate for u} we have:
\begin{equation}\label{W2p growth of u}
\frac{\dot N}{N}\le \frac{(1+\delta N)\lVert\omega\rVert_{1,p}}{1-C_1\delta N}.
\end{equation}
Plugging \eqref{W2p growth of u} back to \eqref{Linfty estimate for nabla u}, the desired result follows from choosing $C$ and $T_\delta$ as
\begin{equation*}
C=\max\{1,C_0,C_1,2C_2\},\quad T_\delta^*=\min\big\{t\ge 0: M(t)\ge \frac{3}{4C\delta}, N(t)\ge \frac{3}{4C\delta}\big\},     
\end{equation*}
which is legitimate as $M,N$ are absolutely continuous increasing functions with $M(0)=N(0)=1$.
\end{proof}

We now state and prove the first theorem concerning lifespan of IIE in perturbative regime.  We will make use of the controlling constant $C$ appeared above.
\begin{thm}\label{Lifespan of IIE}
There exist positive constants $\delta_0<1$ and $C_0^*,C_1^*, C_2^*,C_3^*$, possibly depending on the data $\omega_0$, such that for any $\delta<\delta_0$, $\mathsf E(t):=\lVert\omega\rVert_{1,p}+\lVert\rho\rVert_{2,p}$ remains bounded on the interval $[0,T_\delta]$, where
\begin{equation*}
 T_\delta=C_3^*\log[C_2^*\log( C_1^*\log (C_0^*\delta^{-1}))]. 
\end{equation*}
\end{thm}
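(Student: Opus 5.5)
The plan mirrors the proof of Theorem~\ref{Lifespan of Boussinesq}: derive a perturbed system of differential inequalities for $(M,N)$ and a Duhamel memory quantity, relax it under a bootstrap hypothesis, apply Lemma~\ref{Hierarchical Growth of Norms for Bou and IIE}, and close with Lemma~\ref{Bootstrap Closure Lemma}.

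\textbf{Step 1: the source terms.} Here $\partial_\rho E=\frac12|u|^2$, so $\nabla\partial_\rho E=(\nabla u)^*u$ and $\nabla^2\partial_\rho E=\nabla^2u\,u+\nabla u\otimes\nabla u$. Hence
\begin{equation*}
K_0\lesssim\lVert u\rVert_\infty\lVert\nabla u\rVert_\infty,\qquad K_p\lesssim \lVert u\rVert_{\infty}\lVert u\rVert_{2,p}+\lVert\nabla u\rVert_\infty^2 .
\end{equation*}
I would bound these through $M$ and $N$:
\begin{itemize}
\item $\lVert\nabla u\rVert_\infty\lesssim \dot M/M$, by the definition \eqref{Streching Variable Definition};
\item $\lVert u\rVert_{2,p}\lesssim \dot N/N$;
\item $\lVert u\rVert_\infty\lesssim\lVert u\rVert_{2,p}$, by Sobolev embedding. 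Since $\lVert\omega\rVert_p$ is controlled, a cheaper bound $\lVert u\rVert_\infty\lesssim 1+\delta(\cdots)$ should follow from \eqref{Compact perturbative formula for u} under the bootstrap, and this is what I would use.
\end{itemize}
Feeding these into Lemma~\ref{W1p Estimate for Vorticity} with $\lVert\nabla\rho_0\rVert_\infty,\lVert\nabla^2\rho_0\rVert_p\lesssim\delta$ (Lemma~\ref{Rho 0 Estimate}) gives $\lVert\omega\rVert_{1,p}\lesssim 1+M+\delta M Q$. The memory term $Q$ then satisfies a differential inequality of exactly the shape \eqref{Differential inequality of Duhamel Term}: the terms $\dot M$ and $N\dot M/M$ come from $(M+N)K_0$, and $M\dot M\dot N/N$ comes from $M^2K_p$. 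For the $L^\infty$ vorticity, \eqref{Lagrangian Formulae of Vorticity} gives $\lVert\omega\rVert_\infty\le\lVert\omega_0\rVert_\infty+C\delta\int_0^t MK_0\,d\tau$, which is $\le C(1+\delta Q)$.

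\textbf{Step 2: the bootstrap hypothesis.} $\mathscr H(t)$ is $\delta Q(t)\le1$ together with $\kappa\delta M(t)\le1$ and $\kappa\delta N(t)\le1$, where $\kappa=4C/3$ with $C$ from Lemma~\ref{Estimates of u}. The extra conditions place $t$ inside $[0,T_\delta^*]$, so the denominators $1-C\delta M$ and $1-C\delta N$ are at least $1/4$. Under $\mathscr H$, Lemma~\ref{Estimates of u} then gives
\begin{equation*}
\frac{\dot N}{N}\lesssim\lVert\omega\rVert_{1,p}\lesssim 1+M,\qquad \frac{\dot M}{M}\lesssim\Bigl(1+\log\bigl(2+\tfrac{\dot N}{N}\bigr)\Bigr)\lVert\omega\rVert_\infty ,
\end{equation*}
using $\delta M\le1$. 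Together with $\lVert\omega\rVert_\infty\lesssim1$, these are the relaxed inequalities \eqref{Differential inequality of MYZ}. Lemma~\ref{Hierarchical Growth of Norms for Bou and IIE} then yields the triple-exponential bounds \eqref{Growth of controlling norms} on $M$, $N$ and $Q$, which serve as the outcome $\mathscr O(t)$.

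\textbf{Step 3: closing.} Apply Lemma~\ref{Bootstrap Closure Lemma} with $m=3$, $\mathcal F=(Q,M,N)$, $\kappa_j\in\{1,\kappa,\kappa\}$ and $\zeta_j$ the corresponding $\Lambda$'s. This produces $\delta_0$ and the triple-log time $T_\delta$. On $[0,T_\delta]$ we get $\lVert\omega\rVert_{1,p}\lesssim M(1+\delta Q)<\infty$, and \eqref{2,p regularity of rho} gives $\lVert\rho\rVert_{2,p}\lesssim 1+\delta(M+N+M^2)<\infty$, so $\mathsf E$ stays bounded.

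\textbf{Main obstacle.} The delicate step is Step 1. The source $\{\frac12|u|^2,\rho\}$ is quadratic in $u$, so $K_p$ involves $\lVert u\rVert_{2,p}$, which is itself controlled by $\lVert\omega\rVert_{1,p}$. This creates a circularity: $Q$ feeds $\lVert\omega\rVert_{1,p}$, which feeds $\lVert u\rVert_{2,p}$, which feeds $K_p$ and hence $Q$. I would resolve it by writing every occurrence of $\lVert u\rVert_{2,p}$ as $\dot N/N$ and of $\lVert\nabla u\rVert_\infty$ as $\dot M/M$, so the memory inequality for $Q$ becomes a pure inequality in $(M,N)$ of the form \eqref{Differential inequality of Duhamel Term}. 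One must check that only one factor of $\dot N/N$ appears, multiplied by $\dot M$; the product $\lVert u\rVert_\infty\lVert\nabla^2u\rVert_p$ is exactly where this can fail. If it does, I would instead bound $\lVert u\rVert_\infty\lesssim\lVert\omega\rVert_p$ via \eqref{Compact perturbative formula for u} and absorb the remaining factor using $M\dot M\dot N/N$.
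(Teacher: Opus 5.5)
Your proposal is the paper's argument essentially step for step: the Duhamel bound of Lemma~\ref{W1p Estimate for Vorticity} combined with Lemma~\ref{Rho 0 Estimate}, then Lemma~\ref{Estimates of u} under a hypothesis keeping $C\delta M,\,C\delta N\le 3/4$ and $\delta Q\le 1$, then Lemma~\ref{Hierarchical Growth of Norms for Bou and IIE}, and finally Lemma~\ref{Bootstrap Closure Lemma} with $m=3$. Your direct bound $\lVert\omega\rVert_\infty\le C(1+\delta Q)$ is a slight shortcut compared with the paper's Gr\"onwall step.

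The one place to fix is the term you flagged yourself. The $u\cdot\nabla^2u$ part of $\nabla^2\partial_\rho E$ contributes $M^2\lVert u\rVert_\infty\lVert\nabla^2u\rVert_p\lesssim M^2\dot N/N$. This carries no factor $\dot M$, so it can neither be written as $M\dot M\dot N/N$ nor absorbed into it; absorbing it would need $M\lesssim\dot M$, i.e.\ $\lVert\nabla u\rVert_\infty\gtrsim 1$, which is false in general.

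The term is nevertheless harmless. Since $M$ is nondecreasing and $N(0)=1$, we have $\int_0^t M^2\dot N/N\,d\tau\le M(t)^2\log N(t)$, which is only double exponential. So adding $CM^2\dot N/N$ to \eqref{Differential inequality of Duhamel Term} leaves the conclusion of Lemma~\ref{Hierarchical Growth of Norms for Bou and IIE} intact. The paper's own Duhamel estimate has the same slip: it writes $\lVert\nabla^2u\rVert_p\lVert\nabla u\rVert_\infty$ where $\lVert\nabla^2u\rVert_p\lVert u\rVert_\infty$ should appear.
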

\begin{proof}[Proof of Theorem~\ref{Lifespan of IIE}]
We divide the proof into several steps.
\begin{itemize}
    \item [(I)] We start with necessary estimates to establish the system of differential inequality, like \eqref{Boussinesq Differential Inequalities}. Consider the Duhamel term:
\begin{equation*}
\int_0^t g(\tau,X_\tau)d\tau=\nabla^\perp\rho_0 \int_0^t \nabla^* X_\tau (\nabla^* uu)(X_\tau)d\tau,
\end{equation*}
which admits the following estimates holding for generic $\delta\le 1$, thanks to Lemma \ref{Rho 0 Estimate}:
\begin{equation*}
 \bigg\lVert  \int_0^t g(\tau,X_\tau)d\tau \bigg\rVert_{1,p}\lesssim  \delta\int_0^\infty \lVert \nabla X_\tau\rVert_\infty \lVert\nabla u\rVert_\infty \lVert u\rVert_\infty d\tau+\delta  \int_0^t\lVert\nabla^2 X_\tau\rVert_p \lVert\nabla u\rVert_\infty \lVert u\rVert_\infty
\end{equation*}
\begin{equation*}
 +\delta\int_0^t\lVert\nabla X\rVert_\infty^2 \lVert\nabla^2 u\rVert_p\lVert \nabla u\rVert_\infty d\tau .  
\end{equation*}
Hence, we have:
\begin{equation*}
  \bigg\lVert  \int_0^t g(\tau,X_\tau)d\tau \circ A_t\bigg\rVert_{1,p} \lesssim  M\delta\Big[\int_0^t (\dot M+N\frac{\dot M}{M})\lVert u\rVert_\infty+\frac{d}{dt}M^2 \frac{\dot N}{N}d\tau
  \Big].
\end{equation*}
Therefore, Duhamel's formula yields the following estimate:
\begin{equation*}
 \lVert\omega\rVert_{1,p}\lesssim 1+M+  M\delta\Big[\int_0^t (\dot M+N\frac{\dot M}{M})\lVert u\rVert_\infty+M\dot M \frac{\dot N}{N}d\tau\Big],
\end{equation*}
and 
\begin{equation}\label{Energy norm of IIE}
\mathcal E(t)\lesssim  1+M +\delta M\Big[\int_0^t (\dot M+N\frac{\dot M}{M})\lVert u\rVert_\infty+M\dot M \frac{\dot N}{N}d\tau\Big]+\delta N.
\end{equation}
So, for any time $t\ge 0$ and any $\delta<1$, we have the $\mathbf{unconditional}$ estimate:
\begin{equation}\label{unconditional Duhamel estimate}
    \lVert\omega\rVert_{1,p}\lesssim 1+M+  \delta MQ,\quad Q=\int_0^t (\dot M+N\frac{\dot M}{M})\lVert u\rVert_\infty+\dot M \frac{\dot N}{N}d\tau .
\end{equation}
By Lemma \ref{Estimates of u}, we obtain that for $t\in [0,T_\delta]$ and $\delta<\delta_0\le (2C)^{-1}$:
\begin{equation*}
 \frac{\dot N}{N}\lesssim \frac{1+\delta N}{1-C\delta N} \bigg[1+M+  M\delta\Big(\int_0^t (\dot M+N\frac{\dot M}{M})\lVert u\rVert_\infty+M\dot M \frac{\dot N}{N}d\tau\Big) \bigg] . 
\end{equation*}
Moreover, taking $k=1$ in \eqref{u Wk,p Estimate}, we have
\begin{equation*}
\lVert u\rVert_\infty\lesssim \lVert u\rVert_{1,p}\lesssim \bigg(\frac{1+\delta M}{1-C\delta M}\bigg)\lVert\omega\rVert_p
\end{equation*}
\begin{equation*}
 \lesssim   \frac{1+\delta M}{1-C\delta M}\lVert\omega\rVert_\infty.
\end{equation*}
By noticing that the bounding constant in \eqref{unconditional Duhamel estimate} is some $C^\prime=C(\lVert\omega_0\rVert_{1,p})$, abusing the notation a little bit, we conclude that the following system of differential inequalities hold on $[0,T_\delta]$ for some $C$ depending on $\omega_0$ and the constant $C$ in Lemma \ref{Estimates of u}:
\begin{equation}\label{Differential Inequality System IIE}
\left\{
\begin{aligned}
&{\dot M}\le C  M\Big[\lVert\omega\rVert_\infty+\frac{\lVert\omega\rVert_\infty}{1-C\delta M}\Big]\Big(1+\log \Big(2+\frac{\dot N}{N}\Big)\Big);\\
&\dot N\le  CN\bigg(\frac{1+\delta N}{1-C\delta N}\bigg) \big(1+M+  \delta MQ \big);  \\
&\dot Q=  \Big(\dot M+N\frac{\dot M}{M}\Big)\lVert u\rVert_\infty+M\dot M \frac{\dot N}{N},\quad Q(0)=0 . 
\end{aligned}
\right.    
\end{equation}
Moreover, as long as $C\delta M(t)<1$, we have the following estimate for $\lVert\omega\rVert_\infty$:
\begin{equation*}
 \lVert\omega(t,\cdot)\rVert_\infty \le \lVert\omega_0\rVert_\infty+\lVert\nabla \rho_0\rVert_\infty\int_0^t \lVert\nabla^*X\rVert_\infty \lVert\nabla u\rVert_\infty \lVert u\rVert_\infty d\tau  
\end{equation*}
\begin{equation}\label{Linfty estimate for omega}
 \le   C+C\delta \int_0^t \frac{(1+\delta M)\dot M}{1-C\delta M}\lVert\omega\rVert_\infty d\tau.
\end{equation}

\item [(II)]

Now we propose our bootstrap scheme. We say $\mathscr H(t)$ is satisfied if
\begin{equation}\label{Hypothesis IIE}
 C\delta M\le 1/2,\quad C\delta N\le 1/2  ,\quad \delta Q\le 1.
\end{equation}
Assuming that $\mathscr H(T_*)$ holds, then clearly $T_*\le T_\delta^*$ and the estimates \eqref{W2p estimate for u} and \eqref{Linfty estimate for nabla u} hold. Since $M,N,Q$ are all increasing, we conclude that $\mathscr H(t)$ holds for all $0\le t\le T_*$. 

Moreover, for $t\in [0,T_*]$, \eqref{Linfty estimate for omega} implies that
\begin{equation*}
\lVert\omega(t,\cdot)\rVert_\infty\le   C+3C\delta \int_0^t {\dot M(\tau)\lVert\omega(\tau,\cdot)\rVert_\infty} d\tau.  
\end{equation*}
By the integral version of Gr\"onwall inequality, we have:
\begin{equation*}
 \lVert\omega(t,\cdot)\rVert_\infty\le  C+3C^2\delta\int_0^t \dot M(\tau)\exp[3C\delta(M(t)-M(\tau))]d\tau
\end{equation*}
\begin{equation*}
 \le C+2C\exp(3C\delta M(t))\le 3e^2 C .   
\end{equation*}

Using the fact that $C\ge 1$, we are able to reduce \eqref{Differential Inequality System IIE} $[0,T_*]$ to the following system:
\begin{equation}\label{Relaxed Differential Inequality System IIE}
\left\{
\begin{aligned}
&{\dot M}\le  9e^2CM\Big(1+\log\Big(1+\frac{\dot N}{N}\Big)\Big);\\
&\dot N\le 6CN\big(1+M);  \\
&\dot Q\le 9e^2C(\dot M+N\dot M/M)+M\dot M\dot N/N.
\end{aligned}
\right.    
\end{equation}

We now introduce the following outcome $\mathscr O(t)$: We say $\mathscr O(t)$ holds if 
 there exists $\Lambda_1,\Lambda_2,\Lambda_3,\Lambda_4>0$ such that 
\begin{equation*}
M(t)\le \exp(e^{\Lambda_1 t}),\quad N(t)\le \exp(\Lambda_2\exp(e^{\Lambda_1t})),\quad Q(t)\le \Lambda_4\exp(\Lambda_3\exp(e^{\Lambda_1 t}).
\end{equation*}
Apply Lemma \ref{Hierarchical Growth of Norms for Bou and IIE} to \eqref{Relaxed Differential Inequality System IIE}, with the same $M$,$N$,$Q$ and constant $9e^2C$, it follows that $\mathscr H(t)$ implies $\mathscr O(t)$. Since $M,N,Q$ are all absolutely continuous and increasing with $M(0)=N(0)=1$, $Q(0)=0$, there exists sufficiently small $t$ such that $\mathscr H(t)$ holds. On the other hand, the closedness condition trivially holds for $\mathscr O(t)$.

\item [(III)] We now close the bootstrap, which amounts to show (II) in lemma \ref{Bootstrap Lemma}. To this end, apply Lemma \ref{Bootstrap Closure Lemma} with
\begin{equation*}
  m=3,\quad\mathcal F_1=M,\quad \mathcal F_2=N,\quad \mathcal F_3=Q,\quad \kappa_1=\kappa_2=1/2,\quad \kappa_3=1.  
\end{equation*}
and
\begin{equation*}
 \zeta_1=\zeta_2=1,\quad \zeta_3=   \Lambda_4.
\end{equation*}
As a consequence, we have the bootstrap closed on $[0,T]$ with $T\ge T_\delta$. More precisely:
\begin{equation*}
C_0=\frac{99}{100\max\{1/2,\Lambda_4\}},\quad C_1=\max\{\Lambda_2,\Lambda_3\},\quad C_2=1,\quad C_3=\Lambda_1    
\end{equation*}
and $\delta_0$, $T_\delta$ respectively defined by \eqref{Delta0 Definition} and \eqref{Triple Log Lifespan}. As a consequence, we have for all $t\in [0,T_\delta]$:
\begin{equation*}
  \mathsf E(t)\lesssim 1+M+\delta MQ +\delta N\lesssim 1+M\lesssim 1+C_0\delta^{-1}. 
\end{equation*}
Which completes the proof of the theorem.

\end{itemize}

\end{proof}

\subsection{A Quantitative Continuation Criterion of Inhomogeneous Euler}

As suggested by \eqref{Relaxed Differential Inequality System IIE}, it is possible to control the blow up of IIE using $\lVert\omega\rVert_\infty$ only. Here, we present a quantitative continuation criteria of IIE based on the scalar quantity $\omega$, which is a 
 conditional version of \eqref{BKM L1Linfty Bound}.

\begin{thm}\label{Control of Energy Norm II}
Let $2<p<\infty$, let $(\omega_0,\rho_0)\in W^{1,p}\times W^{2,p}$. Let $\delta$ measuring inhomogeneity of the initial data be defined as:
\begin{equation*}
\delta:=\lVert\rho_0^{-1}-1\rVert_{W^{2,p}}  .  
\end{equation*}
Assume $(\omega,\rho)$ is a solution of \eqref{IIE vorticity} on $[0,T_*)$. Then, there exist constants $\delta_0$ and $\{C_i^*\}_{i=0}^3$ such that if $\delta<\delta_0$, and the following condition holds:
\begin{equation}\label{Quantitative Bound of vorticity integral}
 \int_0^{T} \lVert\omega(t,\cdot)\rVert_\infty dt \le C_3^*\log [C_2^* \log  (C_1^*\log ({C_4^*\delta^{-1}}) )],  
\end{equation}
then the solution can be continued past time $T$.
\end{thm}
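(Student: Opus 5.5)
We reuse the machinery of Theorem~\ref{Lifespan of IIE}, with time replaced by the vorticity-weighted time
\begin{equation*}
s(t):=\int_0^t \lVert\omega(\tau,\cdot)\rVert_\infty d\tau .
\end{equation*}
The goal is to show that $\mathsf E(t)=\lVert\omega\rVert_{1,p}+\lVert\rho\rVert_{2,p}$ stays finite on $[0,T]$ whenever $s(T)$ satisfies \eqref{Quantitative Bound of vorticity integral}. The local well-posedness theory in $W^{1,p}\times W^{2,p}$ then lets us continue the solution past $T$.

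\textbf{Step 1: the system of differential inequalities.} The unconditional Duhamel bound \eqref{unconditional Duhamel estimate} and Lemma~\ref{Estimates of u} still hold, but we no longer bound $\lVert\omega\rVert_\infty$ by a constant. Instead we keep it as an explicit factor in the first line of \eqref{Differential Inequality System IIE}. Using $\lVert u\rVert_\infty\lesssim \frac{1+\delta M}{1-C\delta M}\lVert\omega\rVert_\infty$, we also keep the factor $\lVert\omega\rVert_\infty$ in the $(\dot M+N\dot M/M)\lVert u\rVert_\infty$ part of $\dot Q$. Under the same bootstrap hypothesis \eqref{Hypothesis IIE}, namely $C\delta M\le 1/2$, $C\delta N\le 1/2$, $\delta Q\le 1$, this gives
\begin{equation*}
\dot M\le C M\lVert\omega\rVert_\infty\Big(1+\log\Big(2+\frac{\dot N}{N}\Big)\Big),\qquad \dot N\le CN(1+M).
\end{equation*}
Reparametrizing by $s$ gives $dM/ds\le CM(1+\log(2+\dot N/N))$. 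Feeding $\dot N/N\lesssim 1+M$ into the logarithm, the argument of Lemma~\ref{Hierarchical Growth of Norms for Bou and IIE} yields
\begin{equation*}
M(t)\le \exp\big(e^{\Lambda_1 s(t)}\big).
\end{equation*}
This is the Lagrangian BKM computation sketched at the beginning of Section~3.

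\textbf{Step 2: controlling $N$ and $Q$.} Here lies the main obstacle. The equation $\dot N\le CN(1+M)$ carries no factor $\lVert\omega\rVert_\infty$, so integrating it produces $\int_0^t M\,d\tau$ in real time rather than in $s$. The same happens for the $M\dot M\dot N/N$ term in $Q$.

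The first thing I would try is to avoid real-time integration of $N$ by bounding it through $M$ directly. Integrating $\dot N/N\le C(1+M)$ gives $N(t)\le e^{Ct}\exp(C t M(t))$, and then we absorb the factor $t\le T$ into the constants $C_i^*$. So the constants are allowed to depend on $T$ (or $T_*$); the statement as written permits this, since $T<T_*$ is fixed.

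If that is undesirable, the alternative is to exploit $\lVert\omega\rVert_{1,p}\lesssim 1+M+\delta MQ$ together with $\lVert\omega\rVert_\infty\ge c>0$. Such a lower bound follows when $\omega_0\ne0$: by the Gr\"onwall argument for \eqref{Linfty estimate for omega} under $\mathscr H$, $\lVert\omega\rVert_\infty$ stays comparable to $\lVert\omega_0\rVert_\infty$. Then $dt\lesssim ds$, so every real-time integral becomes an $s$-integral. Either way we obtain the triple-exponential outcome
\begin{equation*}
N\le\exp\big(\Lambda_2\exp(e^{\Lambda_1 s})\big),\qquad Q\le\Lambda_4\exp\big(\Lambda_3\exp(e^{\Lambda_1 s})\big),
\end{equation*}
which we take as $\mathscr O$.

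\textbf{Step 3: closing the bootstrap.} We apply Lemma~\ref{Bootstrap Closure Lemma} in the variable $s$ with $\mathcal F_1=M$, $\mathcal F_2=N$, $\mathcal F_3=Q$ and the same $\kappa_j,\zeta_j$ as in Theorem~\ref{Lifespan of IIE}. This is legitimate because $s$ is continuous and nondecreasing in $t$, and $M,N,Q$ are nondecreasing as functions of $s$. The lemma gives $\mathscr H$ for all $s\le C_3^{*}\log[C_2^{*}\log(C_1^{*}\log(C_4^{*}\delta^{-1}))]$, which is exactly condition \eqref{Quantitative Bound of vorticity integral}. On that range,
\begin{equation*}
\mathsf E\lesssim 1+M+\delta MQ+\delta N\lesssim 1+\delta^{-1},
\end{equation*}
so $\mathsf E$ stays bounded up to $T$, and the solution extends beyond $T$.
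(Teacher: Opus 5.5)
Your argument has the same skeleton as the paper's. You use the hypothesis \eqref{Hypothesis IIE}, reparametrize by $U(t)=\int_0^t\lVert\omega\rVert_\infty d\tau$ to get $M\le\exp(e^{CU})$, take a triple-exponential outcome in $U$, and close against \eqref{Quantitative Bound of vorticity integral}. You differ from the paper in Step 2, and there you are more careful than the paper.

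The paper writes $N(t)\le\exp[6C(1+M)]$, which drops the time integral coming from $\dot N\le 6CN(1+M)$. It also derives $Q(t)\le 30C\exp[12C\exp(e^{Ct})]$ in real time and then records it in $\mathscr O$ with $t$ replaced by $U(t)$. Its only information on $\lVert\omega\rVert_\infty$ is the upper bound $9e^2C$, which gives $U\lesssim t$, the wrong direction. So you have named exactly the ingredient that is missing.

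\textbf{Remedy (a).} This makes $C_1^*$ and $\delta_0$ depend on $T$, which is a weaker criterion. Your remark that $T<T_*$ is fixed does not rescue it, since for $T<T_*$ there is nothing to prove.

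\textbf{Remedy (b).} This is the right one, with one adjustment. The paper's Gr\"onwall step only gives an upper bound with an $O(1)$ constant, so you should use the relative form of the Duhamel estimate:
\begin{equation*}
\Big|\lVert\omega(t)\rVert_\infty-\lVert\omega_0\rVert_\infty\Big|\le K\delta\int_0^t\dot M\lVert\omega\rVert_\infty\,d\tau\le K\delta M(t)\,e^{K\delta M(t)}\lVert\omega_0\rVert_\infty ,
\end{equation*}
with $K$ independent of $\omega_0$. This gives $\lVert\omega\rVert_\infty\ge\frac12\lVert\omega_0\rVert_\infty$ once $K\delta M\le\frac14$.
\begin{itemize}
\item Getting $K\delta M\le\frac14$ may require enlarging the constant in $C\delta M\le\frac12$. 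That is harmless, because $\delta M$ is tiny at closure.
\item The case $\omega_0=0$ is trivial, since then $u\equiv0$.
\item With the lower bound, $t\le 2U(t)/\lVert\omega_0\rVert_\infty$, so $N(t)\le\exp(6Ct(1+M(t)))$ is triple-exponential in $U$.
\end{itemize}
What (b) buys you is constants independent of $T$. The price is dependence on $\lVert\omega_0\rVert_\infty^{-1}$. It also shows that under the bootstrap $U(t)\approx\lVert\omega_0\rVert_\infty t$, so in this regime the criterion is essentially a restatement of Theorem~\ref{Lifespan of IIE}.

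\textbf{Minor correction.} The term $M\dot M\dot N/N$ in $\dot Q$ is not an obstruction. After using $\dot N/N\lesssim 1+M$ it is $\lesssim M(1+M)\dot M$, which integrates directly in $s$. Real time enters $Q$ only through $N$.
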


\begin{proof}[Proof of Theorem~\ref{Control of Energy Norm II}]

We follow a similar bootstrapping argument as we did in section 3. We first propose the same bootstrap assumption $\mathscr H$ defined as in \eqref{Hypothesis IIE} and we have accordingly the differential inequality \eqref{Differential Inequality System IIE} holding for all $t\in [0,T_*]$, provided $\mathscr H(T_*)$ holds.

Hence, similar computation yields for some constant $C>0$:
    \begin{equation*}
        \frac{\dot M}{1+M}\le C\lVert\omega\rVert_\infty[1+\log(1+M)].
    \end{equation*}
    Changing the variable $\mathfrak M(t)=1+\log (1+M)$, straightforward computation yields:
    \begin{equation*}
     M(t)\le \exp\Big[\exp\Big(C\int_0^t\lVert\omega(\tau,\cdot)\rVert_\infty d\tau\Big)\Big]-1.
    \end{equation*}
    Now we set
    \begin{equation*}
     U(t)=\int_0^t \lVert\omega(\tau,\cdot)\rVert_\infty d\tau ,  
    \end{equation*}
    and we rewrite the $M$-bound to
    \begin{equation*}
      M(t)\le  \exp[\exp(CU(t))]-1.
    \end{equation*}
    Hence, we have
    \begin{equation*}
    N(t)\le  \exp[6C(1+M)]\le \exp[6C\exp e^{ CU(t)}].
    \end{equation*}

    Moreover, $\mathscr H(T_*)$ implies $\lVert\omega(t,\cdot)\rVert_\infty\le 9e^2C$ holds for all $t\in [0,T_*]$. Hence, by the proof of Lemma \ref{Hierarchical Growth of Norms for Bou and IIE}, we conclude the following upper bound for $Q$:
    \begin{equation*}
     Q(t)\le  30C\exp[12C\exp (e^{Ct})].
    \end{equation*}

    Now we define $\mathscr O(t)$ as the following
    \begin{equation*}
    M(t)\le  \exp[\exp(CU(t))]-1,\quad   N(t)\le \exp[6C\exp e^{ CU(t)}].
    \end{equation*}
   and
    \begin{equation*}
    Q(t)\le  30C\exp[12C\exp (e^{CU(t)})].
    \end{equation*}
    We now choose our numerical constants. Fix
    \begin{equation*}
     C_0=\frac{9}{300C},\quad  C_1^*=(12C)^{-1},\quad C_2^*=1,\quad C_3^*= C^{-1}.   
    \end{equation*}
    Our quantitative assumption \eqref{Quantitative Bound of vorticity integral} guarantees that
    \begin{equation}\label{U bound}
     U(t)\le \frac{1}{C}\log\log\frac{1}{12C}\log\frac{3}{100C\delta} .   
    \end{equation}
    Which implies the following:
    \begin{equation*}
     \delta Q(t)\le \frac{9}{10},\quad \delta CN(t)\le  C\delta\exp\Big(\frac{1}{2}\log\frac{3}{100C\delta}\Big)=\frac{\sqrt {3C\delta}}{10}.
    \end{equation*}
    and 
    \begin{equation*}
     C\delta M(t)\le \frac{\delta}{12} \log\frac{3}{100C\delta} . 
    \end{equation*}
It suffices to choose $\delta_0$ small enough such that 
\begin{equation*}
  g(\delta)=\frac{\delta}{12} \log\frac{3}{100C\delta}=\frac{\delta}{12}(\log 3-2\log 10-\log (C\delta))
\end{equation*}
is increasing on $(0,\delta_0)$, together with the following condition to ensure \eqref{U bound} is well-defined:
\begin{equation}\label{Validity Condition ofr U}
 \log\frac{1}{12C}\log\frac{3}{100 C\delta_0}>1.   
\end{equation}
Differentiating $g$ yields:
\begin{equation*}
   \log(3/100) -\log(C\delta)-C> 0\quad\Longrightarrow \quad \delta< \frac{3}{100 Ce^C}.
\end{equation*}
while \eqref{Validity Condition ofr U} implies:
\begin{equation}\label{Delta0 Estimate}
  \delta_0<  \frac{3}{100C\exp(12Ce)}.
\end{equation}
Hence, we choose $\delta_0$ that satisfies \eqref{Delta0 Estimate} to estimate $M$:
\begin{equation*}
  C\delta M(t)\le \frac{\delta}{12} \log\frac{3}{100C\delta} <\frac{3Ce}{1200C\exp(12Ce)}\le \frac{1}{1200C}<1/2.
\end{equation*}

Hence, we have checked all conditions in Lemma \ref{Bootstrap Lemma}. We are now ready to conclude: for any $\delta<\delta_0$ with $\delta_0$ satisfying \eqref{Delta0 Estimate}, the feedback loop closes on $[0,T]$ and the proof is completed, as we have the same estimate for $\mathsf E$ as in Theorem \ref{Lifespan of IIE}.
\end{proof}

\section{MHD Case}
We now move on to show the long-time existence of MHD equation in dimension $2$, when the magnetic field $B$ is close to $0$. Here, we fix $p>2$ and define the inhomogeneity measurement to be $\delta:=\lVert \rho_0-1\rVert_{3,p}$. We will have to require one more degree of regularity of $\rho_0$ compared to threshold regularity to guarantee the local well-posedness of $2$-d MHD, i.e. $\rho_0\in W^{4,p}$, which is crucial in our proof and matches the results obtained in \cite{CobbFanelli2023}. 

Consider the following vorticity formulation of MHD:
\begin{equation}\label{MHD Vorticity}
\left\{
\begin{aligned}
&\partial_t\omega+u\cdot\nabla\omega=\{\rho,\Delta\rho\},\\
&\partial_t\rho+u\cdot\nabla\rho=0,\\
&u=\nabla^\perp\Delta^{-1}\omega.
\end{aligned}
\right.
\end{equation}

It is known that if we switch to the vorticity-current variable $(\omega, J)$, $J=\Delta\rho$, we can rewrite the above system as the following symmetric form:
\begin{equation}\label{MHD Vorticity-Current}
\left\{
\begin{aligned}
&\partial_t\omega+u\cdot\nabla\omega=\{\rho, J\},\\
&\partial_t J+u\cdot\nabla J=\{\rho,\omega\}+\mathscr Q(\omega,J),\\
&u=\nabla^\perp\Delta^{-1}\omega,\quad J=\Delta\rho.
\end{aligned}
\right.
\end{equation}
Here, $\mathscr Q$ is a bilinear $0$-th order operator:
\begin{equation*}
\mathscr  Q(\omega,J)=2\mathsf{Tr}(\nabla\nabla^\perp\Delta^{-1}\omega\cdot \nabla\nabla^\perp \Delta^{-1}J)    .
\end{equation*}
We now introduce the Els\"asser variable, which diagonalizes the vorticity-current system. Let $\xi=\omega+J$, $\eta=\omega-J$. Then, the original equation becomes:
\begin{equation}\label{Elsasser System}
\left\{
\begin{aligned}
&\partial_t\xi+(u-\nabla^\perp\rho)\cdot\nabla\xi=\mathscr  Q(\omega,J),\\
&\partial_t \eta+(u+\nabla^\perp\rho)\cdot\nabla \eta=-\mathscr  Q(\omega,J).
\end{aligned}
\right.
\end{equation}

Applying the standard Calderon-Zygmund estimates to $\mathscr Q$ yields the following:
\begin{equation*}
\lVert\mathscr  Q(\omega,J)\rVert_{1,p}=2\lVert\mathsf{Tr}(\nabla\nabla^\perp\Delta^{-1}\omega\cdot \nabla\nabla^\perp \Delta^{-1}J)  \rVert_{1,p}\lesssim \lVert\omega\rVert_{1,p}\lVert  J\rVert_{1,p},
\end{equation*}
\begin{equation*}
\lVert\mathscr  Q(\omega,J)\rVert_{2,p}\lesssim \lVert\omega\rVert_{2,p}\lVert  J\rVert_{\infty}+\lVert\omega\rVert_\infty \lVert J\rVert_{2,p}.
\end{equation*}                                                                   
Change the variable into Lagrangian coordinates, we define
\begin{equation}\label{Lagrangian variables}
    \tilde\xi:=\xi\circ X_t,\quad \tilde\eta:=\eta\circ X_t,
\end{equation}
here $X$ is the flow map of $u$, and we let $A$ be the back to label map. 

The Els\"asser system has the advantage that it allows simple linear transport estimates to close in Sobolev spaces, thanks to the smoother driven velocity field and forcing term admitting the same level regularity as transported quantities. However, it buries the smallness of $\rho$, $B$ and $J$ in the close-to-Euler regime. Hence, we are forced to single out the magnetic field equation independently and handle the loss of derivative in vector transport. This is the key technical difficulty. The idea, similar to the treatment in \cite{CobbFanelli2023}, is to go to higher norm $\lVert u\rVert_{3,p}+\lVert\rho\rVert_{4,p}$, use commutator estimate to get a nuanced control of growth of $B$ based on the higher norm. Then, we can obtain a perturbed system of differential inequalities as Boussinesq and IIE to run a bootstrap argument. 

We set up several notations here. Define:
\begin{equation*}
 M:=\exp\bigg(\int_0^t \lVert\nabla u\rVert_\infty d\tau\bigg),\quad Y:=\lVert \xi\rVert_{1,p}+\lVert\eta\rVert_{1,p},\quad Z:=\lVert\xi\rVert_{2,p}+\lVert \eta\rVert_{2,p}.    
\end{equation*}

We also observe that for any $k\ge 1$, the following equivalence between norms hold:
\begin{equation}\label{equivalence of norms}
 \lVert\xi\rVert_{k,p}+\lVert \eta\rVert_{k,p}\approx \lVert\omega\rVert_{k,p}+\lVert J\rVert_{k,p}\approx \lVert u\rVert_{k+1,p}+\lVert B\rVert_{k+1,p}.   
\end{equation}
Hence, we have $\lVert u\rVert_{2,p}\lesssim Y$ and 
\begin{equation*}
 \lVert\xi_0\rVert_{k,p}, \lVert\eta_0\rVert_{k,p}\lesssim \lVert\omega_0\rVert_{k,p}+\lVert J_0\rVert_{k,p}\lesssim \lVert\omega_0\rVert_{k,p}+\lVert \rho_0\rVert_{4,p}\quad\text{for }k=1,2.
\end{equation*}
Therefore, we may harmlessly represent $A\le C(\lVert\xi_0\rVert_{2,p},\lVert\eta_0\rVert_{2,p})B$ as $A\lesssim B$, provided $C$ is increasing and $\lVert \rho_0\rVert_{4,p}$ is assumed to be bounded by some given numerical constant.

We now carry out the following lemma, which provides several \textit{a priori} estimates that we will need later.
\begin{lem}\label{Apriori Estimate for new Variables}
    Assume  $\rho_0\in W^{4,p}$ satisfies the conditions $\lVert \rho_0-1\rVert_{3,p}\le \delta<1$ and $\lVert\rho_0\rVert_{4,p}\le 100$, let $u_0\in W^{3,p}$ , then the following estimates hold:
    \begin{itemize}
        \item [(I)] The magnetic field $B=\nabla^\perp\rho$ satisfies
        \begin{equation}\label{Magnetic field Estimate}
         \frac{d}{dt} \lVert B\rVert_{2,p}\lesssim \lVert\nabla u\rVert_\infty \lVert B\rVert_{2,p}+ \lVert\nabla\rho\rVert_\infty\lVert u\rVert_{3,p}.     
        \end{equation}
        As a consequence, we have
        \begin{equation}\label{Magnetic field Estimate II}
         \lVert B(t,\cdot)\rVert_{2,p}\lesssim \delta M+\delta M\int_0^tZ(\tau)d\tau
        \end{equation}
        \item[(II)]     The Lagrangian variables $\tilde\xi$, $\tilde\eta$ defined in \eqref{Lagrangian variables} satisfies the following estimate:
    \begin{equation}\label{lagrangian variable estimates}
      \lVert\tilde\xi\rVert_{1,p},\lVert\tilde\eta\rVert_{1,p}\lesssim  1+\exp(Ct\delta)+\exp(2Ct\delta)\int_0^t\lVert\omega\rVert_{1,p}\lVert J\rVert_{1,p}d\tau. 
    \end{equation}
    As a consequence, we have:
    \begin{equation}\label{Elsasser variable estimates}
      Y\lesssim 1+M+\exp(Ct\delta)M+ \delta M^2\exp(2Ct\delta)\int_0^t Y(\tau)\bigg(1+\int_0^t Z(\tau)d\tau\bigg)d\tau.
    \end{equation}
    \item[(III)] We have the following higher norm estimate:
    \begin{equation}\label{higher lagrangian variable estimates}
     \lVert\tilde\xi\rVert_{2,p},\lVert\tilde\eta\rVert_{2,p}\lesssim  \exp\bigg(C\int_0^t Y(\tau)d\tau\bigg)+\int_0^t \exp\bigg(C\int_\tau^t Y(r)dr\bigg)Y(\tau)Z(\tau)d\tau  . 
    \end{equation}
    Which implies:
     \begin{equation}\label{higher Elsasser variable estimates}
      Z\lesssim \exp\bigg(C\int_0^t Y(r)d\tau\bigg).
    \end{equation}
    \end{itemize}

\end{lem}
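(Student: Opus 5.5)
The plan is to prove the three parts in order, each by a Lagrangian or energy-type transport estimate, and then convert each into the stated consequence.

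\textbf{Part (I).} I would differentiate the magnetic-field equation. Since $\rho$ is transported, $B=\nabla^\perp\rho$ satisfies the vector transport equation
\begin{equation*}
\partial_t B+u\cdot\nabla B=B\cdot\nabla u .
\end{equation*}
\begin{itemize}
\item Apply $\nabla^\alpha$ with $|\alpha|\le 2$, multiply by $|\nabla^\alpha B|^{p-2}\nabla^\alpha B$ and integrate. The transport term vanishes because $\nabla\cdot u=0$.
\item Treat the commutator $[\nabla^\alpha,u\cdot\nabla]B$ by the standard Kato--Ponce type commutator estimate. This gives $\lVert\nabla u\rVert_\infty\lVert B\rVert_{2,p}+\lVert\nabla B\rVert_\infty\lVert u\rVert_{2,p}$.
\item Treat the stretching term $\nabla^\alpha(B\cdot\nabla u)$ by the product rule. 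The top-order piece $B\cdot\nabla\nabla^2u$ is bounded by $\lVert B\rVert_\infty\lVert u\rVert_{3,p}$, which is where the loss of derivative shows up. Since $\lVert B\rVert_\infty=\lVert\nabla\rho\rVert_\infty$, this is precisely the second term in \eqref{Magnetic field Estimate}.
\item The lower-order pieces, as well as $\lVert\nabla B\rVert_\infty\lVert u\rVert_{2,p}$, should be absorbed into $\lVert\nabla u\rVert_\infty\lVert B\rVert_{2,p}$. I expect this absorption to be the most delicate point; if it fails, I would use Gagliardo--Nirenberg interpolation to handle it.
\end{itemize}
For the consequence \eqref{Magnetic field Estimate II}:
\begin{itemize}
\item Apply Gr\"onwall with integrating factor $M$. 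This needs $\lVert B_0\rVert_{2,p}\lesssim\delta$.
\item Use $\lVert\nabla\rho\rVert_\infty\le M\lVert\nabla\rho_0\rVert_\infty\lesssim \delta M$ from Lemma~\ref{Regularity of Transport}.
\item Use $\lVert u\rVert_{3,p}\lesssim Z$ from \eqref{equivalence of norms}.
\end{itemize}
Using $M(t)/M(\tau)\le M(t)$, this yields $\delta M+\delta M\int_0^t MZ$. The extra factor $M$ inside the integral would then be absorbed, or bounded by $M(t)$ and folded into the stated form up to the implicit constant. I flag this bookkeeping as something to check.

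\textbf{Part (II).} I would write the Els\"asser system \eqref{Elsasser System} in the Lagrangian coordinates of $u$, so that
\begin{equation*}
\partial_t\tilde\xi-(\nabla^\perp\rho\cdot\nabla\xi)\circ X=\mathscr Q\circ X .
\end{equation*}
\begin{itemize}
\item Write $(\nabla^\perp\rho\cdot\nabla\xi)\circ X$ as $b\cdot\nabla\tilde\xi$, where $b=(\nabla X)^{-1}(\nabla^\perp\rho)\circ X=\nabla^\perp\rho_0$. This uses that $\rho$ is a Lagrangian invariant and $X$ is area preserving, so $b$ is a time-independent divergence-free field of size $\delta$ in $W^{3,p}$.
\item The $\tilde\xi$ equation is then a transport equation with the fixed field $-\nabla^\perp\rho_0$ and forcing $\mathscr Q\circ X$.
\item Its flow has Lipschitz constant $\le e^{Ct\delta}$. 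Lemma~\ref{Regularity of Transport} then gives \eqref{lagrangian variable estimates}, using $\lVert\mathscr Q\rVert_{1,p}\lesssim\lVert\omega\rVert_{1,p}\lVert J\rVert_{1,p}$.
\end{itemize}
For the consequence \eqref{Elsasser variable estimates}:
\begin{itemize}
\item Compose with $A$, which costs a factor $M$.
\item Bound $\lVert\omega\rVert_{1,p}\lesssim Y$ and $\lVert J\rVert_{1,p}\lesssim\lVert B\rVert_{2,p}$, which is controlled by (I).
\item A further factor $M$ comes from moving to the label frame.
\end{itemize}

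\textbf{Part (III).} I would instead work in Eulerian form with the full velocities $u\mp\nabla^\perp\rho$. The reason is that at the $W^{2,p}$ level the fields are Lipschitz with norm $\lesssim Y$, by \eqref{equivalence of norms} and Sobolev embedding.
\begin{itemize}
\item A standard $W^{2,p}$ transport estimate, using the commutator bound $\lVert\nabla v\rVert_\infty\lVert\xi\rVert_{2,p}+\lVert v\rVert_{2,p}\lVert\nabla\xi\rVert_\infty$, gives
\begin{equation*}
\tfrac{d}{dt}Z\lesssim YZ+\lVert\mathscr Q\rVert_{2,p} .
\end{equation*}
\item The second estimate for $\mathscr Q$ gives $\lVert\mathscr Q\rVert_{2,p}\lesssim YZ$.
\item Duhamel's formula gives \eqref{higher lagrangian variable estimates}.
\item Gr\"onwall with a linear-in-$Z$ right-hand side then gives \eqref{higher Elsasser variable estimates}, after adjusting $C$.
\end{itemize}

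I expect the main obstacle to be Part (I), in particular the derivative loss in $B\cdot\nabla u$ and keeping the coefficient of $\lVert u\rVert_{3,p}$ small, namely of size $\delta M$.
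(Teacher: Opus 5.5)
\textbf{Routes.} In (II) you follow the paper exactly. After pulling back by $X$, the frozen-in identity $(\nabla X)^{-1}B\circ X=\nabla^\perp\rho_0$ turns the Els\"asser drift into the fixed field $\mp\nabla^\perp\rho_0$, whose flows have Lipschitz constant $e^{Ct\delta}$. You justify that identity by area preservation, which the paper only asserts. Note that $\nabla^\perp\rho_0$ is $O(\delta)$ in $W^{2,p}$, not $W^{3,p}$, but $W^{2,p}$ is all you need.

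In (I) the paper never writes the stretching equation for $B$. It differentiates $\lVert\partial^j\rho\rVert_p^p$, $j\le 3$, for the scalar $\rho$, and the single Kato--Ponce commutator $[\partial^j,u\cdot\nabla]\rho$ gives both terms of \eqref{Magnetic field Estimate} at once. Your vector route also works, but the cross terms of type $\nabla^2u\,\nabla B$ cannot be absorbed into $\lVert\nabla u\rVert_\infty\lVert B\rVert_{2,p}$ alone. They arise both from the commutator and from $\nabla^2(B\cdot\nabla u)$. Your Gagliardo--Nirenberg fallback is required, e.g.\ $\lVert\nabla^2u\rVert_{2p}\lVert\nabla B\rVert_{2p}\lesssim(\lVert\nabla u\rVert_\infty\lVert\nabla^2B\rVert_p)^{1/2}(\lVert B\rVert_\infty\lVert\nabla^3u\rVert_p)^{1/2}$, which is bounded by the \emph{sum} of the two terms.

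In (III) the paper uses the two-parameter Els\"asser flows $\Psi_\pm^{t,s}$, the rearrangement bound $\lVert f\circ\Psi_-^{t,s}\rVert_{2,p}\lesssim\exp(C\int_s^tY)\lVert f\rVert_{2,p}$, and Duhamel along characteristics. Your Eulerian $W^{2,p}$ energy estimate reaches the same linear Gr\"onwall inequality more directly and avoids second derivatives of flow maps. The paper's version keeps to its Lagrangian program.

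\textbf{Powers of $M$.} This is where the proposal goes wrong: one slip is self-inflicted, the other is a real failure.

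In (I) there is nothing to absorb. With integrating factor $M$ the Duhamel term is exactly $\int_0^t\frac{M(t)}{M(\tau)}\,\delta M(\tau)Z(\tau)\,d\tau=\delta M(t)\int_0^tZ$, which is what the paper's substitution $\mathcal B=M^{-1}\lVert B\rVert_{2,p}$ encodes. Bounding $M(t)/M(\tau)\le M(t)$ throws this cancellation away. Your fallback of folding the extra $M(t)$ into the implicit constant is not legitimate, since $M$ is unbounded. (If the commutator constant exceeds $1$, take $M$ with the constant $C$ in the exponent as in \eqref{Streching Variable Definition}; the paper also suppresses this.)

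In (II) the step you claim fails. You correctly identify the forcing as $\mathscr Q\circ X$. But then Lemma~\ref{Regularity of Transport} gives $\lVert\nabla(\mathscr Q\circ X_\tau\circ X_-(\tau))\rVert_p\le M(\tau)e^{C\tau\delta}\lVert\nabla\mathscr Q\rVert_p$. So you obtain \eqref{lagrangian variable estimates} with an extra $M(\tau)$ in the integrand, not the displayed estimate. After composing with $A$ and inserting \eqref{Magnetic field Estimate II}, you get $\delta M^3$ rather than $\delta M^2$ in \eqref{Elsasser variable estimates}. Your ``further factor $M$'' is real, and your bullets cannot be reconciled with the stated exponents.

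The paper reaches those exponents only because its Duhamel formula writes the integrand as $\mathscr Q(\omega,J)\circ X_-$, dropping the composition with $X_\tau$. You should therefore state and prove (II) with one more power of $M$. This is harmless downstream: in Theorem~\ref{Lifespan of MHD} it only changes constants in the triple-exponential bounds.
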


\begin{proof}[Proof of Lemma~\ref{Apriori Estimate for new Variables}]
\begin{itemize}
We first remark that the proof works as long as $\lVert\rho_0\rVert_{4,p}$ is bounded, here we set a bound to remove the dependence of constants $C$ in $\lVert\rho_0\rVert_{4,p}$. However, although we have $u_0, B_0\in W^{3,p}$, the smallness assumption is really at the level of natural local well-posedness regularity class of MHD.

    \item [] 
    
    (I) We estimate $\lVert B\rVert_{2,p}=\lVert \nabla\rho\rVert_{2,p}$. Straightforward calculation implies:
    \begin{equation*}
      \lVert B\rVert_{2,p}^p=\sum_{j=1}^3\lVert \partial^j\rho\rVert_p^p=\sum_{j=1}^3\int_\Omega \lvert \partial^j \rho\rvert^p dx   . \end{equation*}
    Therefore:
    \begin{equation*}
     \frac{d}{dt} \lVert B\rVert_{2,p}^p=p\lVert B\rVert_{2,p}^{p-1}\frac{d}{dt} \lVert B\rVert_{2,p}=\sum_{j=1}^3\int_\Omega\frac{d}{dt} (\lvert D^j \rho\rvert^2)^{p/2} dx    
    \end{equation*}
    \begin{equation*}
     = \sum_{j=1}^3\int_\Omega p\lvert D^j \rho\rvert^{p-2}D^j\rho  D^j\partial_t \rho dx  
    \end{equation*}
    \begin{equation*}
     = -\sum_{j=1}^3\int_\Omega p\lvert D^j \rho\rvert^{p-2}D^j\rho  \partial^j(u\cdot\nabla \rho) dx  
    \end{equation*}
    \begin{equation*}
       = -\sum_{j=1}^3\int_\Omega p\lvert \partial^j \rho\rvert^{p-2}\partial^j\rho  [\partial^j,u\cdot\nabla] \rho dx -\underbrace{\sum_{j=1}^3\int_\Omega p\lvert \partial^j \rho\rvert^{p-2}\partial^j\rho  (u\cdot\nabla \partial^j \rho)dx}_{(*)}. 
    \end{equation*}
    Notice that:
    \begin{equation*}
    p\lvert D^j \rho\rvert^{p-2}D^j\rho  (u\cdot\nabla D^j \rho)=u\cdot\nabla(\lvert D^j\rho\rvert^p),    
    \end{equation*}
    and the last term $(*)$ vanishes due to divergence-free property of $u$. Moreover, by Kato-Ponce commutator estimate [cf. \cite{KatoPonce1988}\cite{Li2019}]:
    \begin{equation}\label{Kato-Ponce Commutator}
       \lVert[\partial^j,u\cdot\nabla] \rho  \rVert_{p}\lesssim \lVert \partial^j u\rVert_p\lVert \nabla\rho\rVert_\infty+\lVert\nabla u\rVert_\infty \lVert \nabla^3\rho\rVert_p.
    \end{equation}
Therefore, by H\"older inequality, we obtain
\begin{equation*}
 \lVert B\rVert_{2,p}^{p-1}\frac{d}{dt} \lVert B\rVert_{2,p}\le   \sum_{j=1}^3\int_\Omega \Big\lvert \lvert \partial^j \rho\rvert^{p-2}\partial^j\rho  [\partial^j,u\cdot\nabla] \rho \Big\rvert dx  
\end{equation*}
\begin{equation*}
 \lesssim    \sum_{j=1}^3\Big(\int_\Omega\lvert D^j\rho\rvert^{(p-1)\cdot q} dx\Big)^{1/q}\lVert[D^j,u\cdot\nabla] \rho  \rVert_{p}
\end{equation*}
\begin{equation*}
  \lesssim  \lVert B\rVert_{2,p}^{p/q}(\lVert u\rVert_{3,p}\lVert \nabla\rho\rVert_\infty+\lVert\nabla u\rVert_\infty \lVert \nabla^3\rho\rVert_p)
\end{equation*}
where $q$ is the H\"older conjugate of $p$. Hence, dividing both sides by $\lVert B\rVert_{2,p}^{p/q}$ and using $p/q=p-1$, we conclude: 
\begin{equation*}
 \frac{d}{dt} \lVert B\rVert_{2,p}\lesssim \lVert\nabla u\rVert_\infty \lVert B\rVert_{2,p}+ \lVert\nabla\rho\rVert_\infty\lVert u\rVert_{3,p}.  
\end{equation*}
   Therefore, we established \eqref{Magnetic field Estimate}.

   Now, rewrite \eqref{Magnetic field Estimate} in terms of $M$, we have
   \begin{equation*}
    \frac{d}{dt}  \lVert B\rVert_{2,p}\lesssim \frac{\dot M}{M}\lVert B\rVert_{2,p}+\delta MZ,
   \end{equation*}
   where we use  \eqref{Transport Estimate}. Now, we define the new variable $\mathcal B:=M^{-1}\lVert B\rVert_{2,p}$ and rewrite the above inequality as follows:
   \begin{equation*}
     \dot{\mathcal B}\lesssim \delta Z,\quad \mathcal B_0=M^{-1}(0)\lVert B_0\rVert_{2,p}\le \delta.
   \end{equation*}
   Integrating the inequality, we conclude:
   \begin{equation*}
    \mathcal B(t)\lesssim \lVert B_0\rVert_{2,p}+ \delta\int_0^tZ(\tau)d\tau\quad\Longrightarrow\quad  \lVert B(t,\cdot)\rVert_{2,p}\lesssim \delta M\bigg(1+\int_0^tZ(\tau)d\tau\bigg).
   \end{equation*}

    \item [](II) We notice by Duhamel's principle that $\tilde\xi$, $\tilde\eta$ are respectively transported by the flows $X_\mp$, defined as follows:
    \begin{equation*}
       \dot X_\mp=\mp B_0(X_\mp),\quad B_0=\nabla^\perp\rho_0 .
    \end{equation*}
    Denote by $A_\mp$ the corresponding back-to-label map, by chord-arc estimate we have:
    \begin{equation*}
     \lVert\nabla A_\mp\rVert_\infty\le \exp\bigg(\int_0^t\lVert\nabla B_0\rVert_\infty d\tau\bigg) \le  \exp\bigg(C\int_0^t\lVert\nabla\rho_0\rVert_{2,p} d\tau\bigg) \le \exp(Ct\delta).
    \end{equation*}
    
    Now, Duhamel's principle yields:
    \begin{equation*}
     \tilde\xi(t,x)=\bigg[\tilde\xi_0+\int_0^t  \mathscr Q(\omega,J)(X_-(\tau,\cdot))d\tau\bigg]\circ A_-(t,x) ;
    \end{equation*}
    \begin{equation*}
     \tilde\eta(t,x)=\bigg[\tilde\eta_0+\int_0^t  \mathscr Q(\omega,J)(X_+(\tau,\cdot))d\tau\bigg]\circ A_+(t,x) .
    \end{equation*}
    Standard computation yields:
    \begin{equation*}
     \lVert\tilde\xi\rVert_{1,p}\lesssim \lVert \tilde\xi_0\rVert_p+\lVert\nabla A_-\rVert_\infty\lVert \tilde\xi_0\rVert_p+\lVert\nabla A_-\rVert_\infty\int_0^t \lVert\nabla X_-(\tau)\rVert_\infty \lVert\nabla \mathscr Q\rVert_pd\tau
    \end{equation*}
    \begin{equation*}
   \lesssim     1+\exp(Ct\delta)+\exp(2Ct\delta)\int_0^t\lVert\omega\rVert_{1,p}\lVert  J\rVert_{1,p}d\tau.
    \end{equation*}
    Similar estimates yield the same bound:
      \begin{equation*}
     \lVert\tilde\eta\rVert_{1,p}\lesssim 1+\exp(Ct\delta)+\exp(2Ct\delta)\int_0^t\lVert\omega\rVert_{1,p}\lVert  J\rVert_{1,p}d\tau,
    \end{equation*}
therefore \eqref{lagrangian variable estimates} holds true.

   By Sobolev estimates of rearrangements \eqref{Transport Estimate}, we have
   \begin{equation*}
    \lVert\xi   \rVert_{1,p}\lesssim 1+\lVert\nabla X\rVert_\infty \lVert\tilde\xi\rVert_{1,p},\quad \lVert\eta   \rVert_{1,p}\lesssim 1+\lVert\nabla X\rVert_\infty \lVert\tilde\eta\rVert_{1,p},
   \end{equation*}
   which, together with the estimate $\lVert \omega\rVert_{1,p}\lesssim Y$ and \eqref{Magnetic field Estimate II}, demonstrates \eqref{Elsasser variable estimates}.
   \item [](III) We now track $\xi$, $\eta$ along Els\"asser characteristics: Define $\Phi_\pm(r,t)$ as the two-parameter flow map associated with div-free vector field $u{\pm}\nabla^\perp\rho$:
   \begin{equation*}
    \partial_t\Phi_{\pm}^{r,t}(a)=(u\pm \nabla^\perp \rho)(t,\Phi_\pm^{r,t}(a)),\quad \Phi_\pm^{r,r}(a)=a.   \end{equation*}
    The following semigroup property holds:
    \begin{equation*}
       \Phi_{\pm}^{r,s}\circ \Phi_{\pm}^{s,t}=\Phi_{\pm}^{r,t},\quad \forall 0\le r\le s\le t. 
    \end{equation*}
    We also need to make use of the spatial inverse of $\Phi_{\pm}^{r,t}$. Let us denote
    \begin{equation*}
      \Psi_{\pm}^{t,s}=(\Phi_{\pm}^{s,t})^{-1}.  
    \end{equation*}
    Hence we have 
    \begin{equation*}
      \Phi_{\pm}^{r,s} = \Phi_{\pm}^{r,t}\circ \Psi_{\pm}^{t,s}.
    \end{equation*}
Then, we notice:
\begin{equation}\label{Elssaser Variables along Characteristics}
 \xi_t=\xi_0\circ \Psi_-^{t,0}+\int_0^t \mathscr Q(\omega,J)\circ \Psi_-^{t,\tau} d\tau,\quad  \eta:=\eta_0\circ  \Psi_+^{t,0}-\int_0^t \mathscr Q(\omega,J)\circ \Psi_+^{t,\tau}d\tau.  
\end{equation}
Now, we use the Sobolev rearrangement estimate: For general function $f\in W^{2,p}$:
\begin{equation*}
   \lVert f\circ \Psi_-^{t,s}\rVert_{2,p}\le \lVert f\rVert_p+\lVert\nabla \Psi_-^{t,s}\rVert_\infty\lVert\nabla f\rVert_p+R_2,
\end{equation*}
where $R_2$ is the second-order differentiated term, reads:
\begin{equation*}
 R_2=\lVert\nabla(\nabla^*\Psi_-^{t,s}\nabla f\circ \Psi_-^{t,s})\rVert_p\lesssim \lVert \nabla^2\Psi_-^{t,s}\rVert_p \lVert\nabla f\rVert_\infty+\lVert\nabla\Psi_-^{t,s}\rVert_\infty^2\lVert\nabla^2f\rVert_p    
\end{equation*}
\begin{equation*}
 \lesssim  \exp\bigg(C\int_s^t \lVert\nabla(u-B)\rVert_{1,p}d\tau\bigg) \lVert f\rVert_{2,p} \lesssim \exp\bigg(C\int_s^t Y(\tau)d\tau\bigg)\lVert f\rVert_{2,p}.
\end{equation*}
Combining with the chord-arc bound \eqref{Exponential Bound for Deformation}:
\begin{equation*}
\lVert\nabla\Psi_-^{t,s}\rVert_\infty\le \exp\bigg(\int_s^t \lVert \nabla(u-B)\rVert_\infty d\tau\bigg)\le  \exp\bigg(\int_s^t \lVert \nabla(u-B)\rVert_{1,p}d\tau\bigg)\le \exp\bigg(\int_s^t Y(\tau)d\tau\bigg),   
\end{equation*}
where we use the Sobolev embedding $W^{1,p}\hookrightarrow L^\infty$, we conclude:
\begin{equation}
 \lVert f\circ \Psi_-^{t,s} \rVert_{2,p}\lesssim   \exp\bigg(C\int_s^t Y(\tau)d\tau\bigg)\lVert f\rVert_{2,p}.
\end{equation}
It is easy to see that the estimate of $\eta$ follows the same line as $\xi$. Hence, it suffices to estimate $\lVert\xi\rVert_{2,p}$. We notice that
   \begin{equation*}
  \lVert\mathscr Q(\omega,J)\rVert_{2,p}\lesssim \lVert\omega\rVert_{2,p}\lVert J\rVert_\infty+\lVert\omega\rVert_\infty\lVert J\rVert_{2,p}\lesssim YZ.   
   \end{equation*}  
Then, using \eqref{Elssaser Variables along Characteristics}, we have
\begin{equation*}
 \lVert \xi(t,\cdot)\rVert_{2,p}\lesssim \exp\bigg(C\int_0^t Y(\tau)d\tau\bigg)+\int_0^t \exp\bigg(C\int_\tau^t Y(r)dr\bigg)Y(\tau)Z(\tau)d\tau.
\end{equation*}
Similar estimate also holds for $\eta$, following the same line of argument. Summing up the inequalities, by definition of $Z$, we obtain:
\begin{equation}\label{Z estimate}
 Z(t)\lesssim   \exp\bigg(C\int_0^t Y(\tau)d\tau\bigg)+\int_0^t \exp\bigg(C\int_\tau^t Y(r)dr\bigg)Y(\tau)Z(\tau)d\tau.
\end{equation}
Now we define 
\begin{equation*}
\mathcal Z(t):=\exp\bigg(-C\int_0^t Y(\tau)d\tau\bigg)Z(t).
\end{equation*}
Then the inequality \eqref{Z estimate} can be reformulated in terms of $\mathcal Z$:
\begin{equation*}
 \mathcal Z(t)\lesssim 1+\int_0^t Y(\tau)\mathcal Z(\tau)d\tau.  
\end{equation*}
Thanks to the Gr\"onwall inequality, we conclude:
\begin{equation*}
  \mathcal Z(t)\lesssim \exp\bigg(\int_0^t Y(\tau)d\tau\bigg)\quad\Longrightarrow\quad Z(t)\lesssim \exp\bigg(C\int_0^t Y(\tau)d\tau\bigg).  
\end{equation*}
Therefore, the desired estimate \eqref{higher Elsasser variable estimates} follows.
    \end{itemize}
\end{proof}

Now, combining the extrapolation inequality \eqref{Logarithmic Sobolev} with the above estimates, we have:
\begin{equation*}
 \frac{\dot M}{M} =\lVert\nabla u\rVert_\infty\le (1+\log(2+\lVert\omega\rVert_{1,p}))\lVert\omega\rVert_\infty  
\end{equation*}
\begin{equation*}
 \lesssim    [1+\log(2+\lVert\xi\rVert_{1,p}+\lVert\eta\rVert_{1,p})](\lVert\xi\rVert_{\infty}+\lVert\eta\rVert_\infty)
\end{equation*}
Using the transport structure of $\xi$ and Sobolev embedding $W^{1,p}\hookrightarrow L^\infty$, we have:
\begin{equation*}
\lVert \xi\rVert_\infty=\lVert\tilde\xi\rVert_\infty\le \lVert\xi_0\rVert_\infty+\int_0^t \lVert\mathscr Q(\omega,J)\rVert_\infty d\tau   
\end{equation*}
\begin{equation}\label{Linfty Estimate for xi}
 \lesssim 1+ \int_0^t \lVert\omega\rVert_{1,p} \lVert J\rVert_{1,p}d\tau.
\end{equation}
And the same estimates hold for $\eta$ following the same argument. Therefore, we conclude the following system of inequality holds for $M$, $Y$ and $Z$ for short time, with sufficiently large $C$ depending on $\omega_0$:
\begin{equation}\label{Differential Inequality System MHD}
\left\{
\begin{aligned}
&\dot M\le  CM(1+\log (1+Y))(1+Q),\\
&Y\le  C+CM+C\exp(Ct\delta)M+ C M\exp(2Ct\delta)Q,\\
&Z\le C\exp\bigg(C\int_0^t Y(\tau)d\tau\bigg),\\
& \dot Q=\lVert u\rVert_{2,p}\lVert B\rVert_{2,p}\le C\delta MY\bigg(1+\int_0^t Z(\tau)d\tau\bigg),\quad Q(0)=0.
\end{aligned}
\right.    
\end{equation}

Now we state our main theorem in this section.
\begin{thm}\label{Lifespan of MHD}
 Assume $\lVert\rho_0\lVert_{4,p}\le 100$ and $\lVert\rho_0-1\rVert_{3,p}<\delta$, then there exist constants $\delta_0$ and $C_1^\prime,C_2^\prime,C_3^\prime,C_4^\prime>0$ depending on $\omega_0$, such that as long as $\delta<\delta_0$, the energy norm $Z$ stays bounded on $[0,T_\delta]$. Where
\begin{equation}\label{Time of Existence of MHD}
 T_\delta:=C_1^\prime\log [C_2^\prime\log (C_3^\prime\log( C_4^\prime \delta^{-1}))].   
\end{equation}
\end{thm}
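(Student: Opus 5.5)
We will run the same bootstrap scheme as for Theorems~\ref{Lifespan of Boussinesq} and \ref{Lifespan of IIE}, now applied to the system \eqref{Differential Inequality System MHD}.

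\textbf{Bootstrap hypothesis.} We say $\mathscr H(t)$ holds if
\begin{equation*}
Q(t)\le 1,\qquad \delta t\le 1,\qquad \delta\int_0^t Z(\tau)\,d\tau\le 1 .
\end{equation*}
The condition $\delta t\le1$ is harmless, since $T_\delta$ grows only like a triple logarithm. All three quantities are nondecreasing, so $\mathscr H(T_*)$ gives $\mathscr H(t)$ for every $t\le T_*$. Under $\mathscr H$ we have $\exp(Ct\delta),\exp(2Ct\delta)\le e^{2C}$. The second line of \eqref{Differential Inequality System MHD} then becomes $Y\le C'(1+M)$, and the first becomes $\dot M\le 2CM\bigl(1+\log(1+C'(1+M))\bigr)$.

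\textbf{Growth bounds under $\mathscr H$.} Set $\mathfrak M=1+\log(1+M)$, as in the proof of Lemma~\ref{Hierarchical Growth of Norms for Bou and IIE}. This gives $M\le\exp(e^{\Lambda_1t})$, and therefore $Y\lesssim\exp(e^{\Lambda_1 t})$. The third line of \eqref{Differential Inequality System MHD} then yields
\begin{equation*}
Z\le C\exp\Bigl(C\int_0^tY\Bigr)\le \exp\bigl(\Lambda_2\exp(e^{\Lambda_1t})\bigr),
\end{equation*}
and the same triple-exponential bound (with a multiplicative constant) holds for $\int_0^tZ$. For $Q$, integrate $\dot Q\le C\delta MY\bigl(1+\int_0^tZ\bigr)\le 2C\delta MY$ using $\mathscr H$. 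This gives $Q\le \delta\Lambda_4\exp(2e^{\Lambda_1 t})$, so $Q$ is even smaller than the other quantities. The outcome $\mathscr O(t)$ consists of these triple-exponential bounds on $\int_0^tZ$ and $Q/\delta$ (and on $M$, $Y$, $Z$), all of the form $\zeta_j\mathcal E(t)$.

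\textbf{Closing the bootstrap.} Apply Lemma~\ref{Bootstrap Closure Lemma} with $\mathcal F_1=\int_0^tZ$ and $\mathcal F_2=Q/\delta$. The second quantity needs care: the hypothesis $Q\le1$ reads $\delta\mathcal F_2\le1$, so we take $\kappa_2=1$. Also include $\mathcal F_3=t$, which is trivially bounded by $\mathcal E$. With $\mathcal E(t)=\exp(C_1\exp(C_2e^{C_3t}))$, Lemma~\ref{Bootstrap Closure Lemma} gives $\delta_0$ and the lifespan $T_\delta$ of the form \eqref{Time of Existence of MHD}. On $[0,T_\delta]$ it also gives
\begin{equation*}
Z\le \exp\Bigl(C\int_0^tY\Bigr)\lesssim \mathcal E(T_\delta)\lesssim\delta^{-1},
\end{equation*}
so $Z$ stays bounded. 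By \eqref{equivalence of norms} this controls $\lVert u\rVert_{3,p}+\lVert B\rVert_{3,p}$, and local well-posedness gives continuation. Conditions (II)–(IV) of Lemma~\ref{Bootstrap Lemma} are verified exactly as before, using continuity and monotonicity.

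\textbf{Main obstacle.} The delicate point is the coupling loop $Q\to Y\to M$ together with $Z$ feeding into $\dot Q$. The $Q$-equation involves $\int Z$, which is only triple-exponentially controlled, so smallness must come entirely from the prefactor $\delta$. We must make sure that bounding $Y$ (and hence $M$) uses only the hypothesis $Q\le1$, and not any bound on $Z$; otherwise the log-Sobolev step for $\dot M$ would produce a quadruple exponential. The first line of \eqref{Differential Inequality System MHD} involves only $\log(1+Y)$ and $Q$, so this ordering is consistent: $M$ stays double exponential, $Z$ is triple exponential, and $\delta\int_0^t Z\le 1$ closes on the triple-log time scale.
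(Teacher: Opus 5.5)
Your overall scheme matches the paper's: bootstrap on $Q\le 1$ and $\delta t\lesssim 1$, get double-exponential $M,Y$ and triple-exponential $Z$, then close on a triple-log interval. But one step is wrong as written, and it is the step that is supposed to produce the smallness. The hypothesis $\delta\int_0^t Z\le 1$ only gives $\delta\bigl(1+\int_0^tZ\bigr)\le 1+\delta\le 2$, hence
\[
\dot Q\le C\delta MY\Bigl(1+\int_0^tZ\Bigr)\le 2CMY ,
\]
with no factor $\delta$ left. Your bound $\dot Q\le 2C\delta MY$ would need $\int_0^tZ\le 1$. With $\dot Q\le 2CMY$ you only get $Q\lesssim\int_0^t\exp(2e^{\Lambda_1\tau})\,d\tau$. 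This upper bound exceeds $1$ after a $\delta$-independent time, so it cannot recover $Q\le 1$ on an interval that grows as $\delta\to 0$. Equivalently, $\mathcal F_2=Q/\delta$ is then only bounded by $\delta^{-1}\exp(2e^{\Lambda_1 t})$. That is not of the form $\zeta_2\mathcal E(t)$ with $\zeta_2$ independent of $\delta$, so Lemma~\ref{Bootstrap Closure Lemma} cannot be applied to it. In particular, your remark that $Q$ is ``even smaller than the other quantities'' is false: $Q$ is exactly the quantity whose smallness determines $T_\delta$ in the paper.

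The repair uses something you already have. Under $\mathscr H$ you derived $\int_0^tZ\lesssim\exp\bigl(\Lambda_2\exp(e^{\Lambda_1t})\bigr)$. Insert this bound, rather than $\delta\int_0^tZ\le 1$, into the $\dot Q$ inequality. This gives $Q\le\delta\,\Lambda_4\exp\bigl(\Lambda_3\exp(e^{\Lambda_1 t})\bigr)$, i.e.\ $\delta$ times a triple exponential. That is the paper's bound $Q\le C_4\delta\exp(C_2\exp(4e^{C_1t}))$. Then $Q/\delta\le\zeta_2\mathcal E(t)$ and your closure goes through, with $Q\le1$ as a binding constraint, as your own ``main obstacle'' paragraph correctly anticipates. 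After this fix the hypothesis $\delta\int_0^tZ\le 1$ is redundant, and the argument coincides with the paper's. The one real difference is how you recover $\delta t\le 1$: you feed $\mathcal F_3=t$ into Lemma~\ref{Bootstrap Closure Lemma}. This is tidier than the paper's explicit check that $f(\delta)=C\delta T_\delta$ is increasing with $f(\delta_0)\le 1$.
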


\begin{proof}[Proof of theorem~\ref{Lifespan of MHD}]

We use a similar bootstrap argument to show the desired result. Our bootstrap assumption $\mathscr H(t)$ is said to be satisfied if
\begin{equation*}
Q(t)\le 1,\quad C\delta t\le 1.  
\end{equation*}
Since $Q(0)=0$ and $\dot Q>0$, for any $\delta$ fixed we can define
\begin{equation*}
 T_*=\sup\{ t:Q(t)\le 1, C\delta t\le 1\}.   
\end{equation*}
Then $\mathscr H(t)$ is satisfied for all $t\in [0,T_*]$. Now on $[0,T_*]$, the system \eqref{Differential Inequality System MHD} of differential inequalities becomes:
\begin{equation}\label{Differential Inequality System MHD, Simplified}
\left\{
\begin{aligned}
&\dot M\le 2 CM(1+\log (1+Y)),\\
&Y\le  C+C(e^2+e+1)M,\\
&Z\le C\exp\bigg(C\int_0^t Y(\tau)d\tau\bigg),\\
& \dot Q=\lVert u\rVert_{2,p}\lVert B\rVert_{2,p}\le C\delta MY\bigg(1+\int_0^t Z(\tau)d\tau\bigg),\quad Q(0)=0.
\end{aligned}
\right.    
\end{equation}
Define $\mathfrak M(t):=1+\log(1+M(t))$, following a similar argument as in the proof of Boussinesq and IIE case, we find that for $C_1:=6C^2$:  
\begin{equation*}
 \dot{\mathfrak M}(t)\le C_1(1+\mathfrak M(t))\quad \Longrightarrow\quad \mathfrak M(t)\le 2e^{C_1t}-1.
\end{equation*}
As a consequence, we obtain
\begin{equation*}
  M(t)\le \exp(2e^{C_1t})-1,\quad Y(t)\le 2Ce^2 \exp(2e^{C_1t}).
\end{equation*}
Now, double exponential growth of $Y$ implies the triple exponential growth of $Z$. More precisely:
\begin{equation*}
 Z(t)\le C\exp\bigg(2C^2e^2\int_0^t   \exp(2e^{C_1r})dr\bigg)\le C\exp(C_2\exp(2e^{C_1t})),\quad C_2:=\frac{C^2e^2}{C_1}=e^2/6.
\end{equation*}
Now, we define the bootstrap outcome $\mathscr O(t)$ to be:
\begin{equation*}
M(t)\le \exp(2e^{C_1t}),\quad Y(t)\le 2Ce^2 \exp(2e^{C_1t}),\quad Z(t) \le C\exp(C_2\exp(2e^{C_1t})) .
\end{equation*}
We have already demonstrate that $\mathscr H(t)$ implies $\mathscr O(t)$. It suffices to show the feedback loop closes for some chosen $\{C_i^\prime\}_{i=1}^4$ and corresponding $T_\delta$. 

To this end, we let
\begin{equation*}
C_1^\prime=C_1^{-1},\quad C_2^\prime=\frac{1}{4},\quad  C_3^\prime=C_2^{-1},\quad C_4^\prime=\frac{99}{100C_4}.   
\end{equation*}
Assume now $\mathscr O(T_*)$ holds for some $T_*\le T_{\delta_0}$, notice first that $Q$ satisfies the following triple exponential estimate:
\begin{equation*}
 Q(t)\le   2C^2e^2\delta \int_0^t\exp(4e^{C_1\tau})\bigg(1+\int_0^\tau C\exp(C_2\exp(2e^{C_1r}))dr\bigg)d\tau
\end{equation*}
\begin{equation*}
 \le 2C^2e^2\delta \int_0^t  \exp(4e^{C_1\tau})C_3\exp(C_2\exp(4e^{C_1\tau})))d\tau
\end{equation*}
\begin{equation*}
 \le   2C^2\delta e^2 C_3(4C_1C_2)^{-1} \exp(C_2\exp(4e^{C_1 t})):=C_4\delta\exp(C_2\exp(4e^{C_1 t})).
\end{equation*}
Here:
\begin{equation*}
 C_3= \frac{e}{2e^3C_1C_2},\quad C_4=  2C^2 e^2 C_3(4C_1C_2)^{-1} =\frac{C^2}{4C_1^2C_2^2}=\frac{1}{4C^2e^2}.
\end{equation*}

Now for all $0\le t\le T_*\le T_{\delta_0}$, increasing property of the exponential yields:
\begin{equation*}
 Q(t)\le    Q(T_{\delta_0})=1,\quad C\delta_0 t\le CC_1^{-1}\delta_0\log \frac{1}{4}\log C_2^{-1}\log\frac{1}{C_4^\prime\delta_0}:=f(\delta_0).
\end{equation*}
It suffices to find $\delta_0$ small enough such that $f^\prime(\delta)>0$ for all $\delta<\delta_0$ and $f(\delta_0)\le 1$, so the desired bound holds for all $\delta<\delta_0$ and the proof is thus completed. To this end, we let
\begin{equation*}
 \alpha(\delta):=\log(C_4^\prime\delta^{-1}),\quad \beta(\delta):= \log (C_2^{-1}\alpha(\delta)) ,\quad \gamma(\delta):= \log (\beta(\delta)/4).
\end{equation*}
Then we have:
\begin{equation*}
 \alpha^\prime(\delta)=- \delta^{-1},\quad \beta^\prime(\delta)=-\frac{1}{\delta\alpha(\delta)},\quad \gamma^\prime(\delta)=-\frac{1}{\delta\alpha(\delta)\beta(\delta)},\quad f(\delta)= CC_1^{-1}\delta\gamma(\delta).
\end{equation*}
Hence, we obtain
\begin{equation*}
 f^\prime(\delta)= CC_1^{-1}\bigg(\gamma(\delta)-\frac{1}{\alpha(\delta)\beta(\delta)} \bigg) .
\end{equation*}
So it suffices to let $\gamma\ge 2$ and $\alpha\beta\ge 1$. Hence, we may choose $\delta_0=C_4^\prime\exp(-C_2e^{4e^2})$, which gives
\begin{equation*}
\alpha(\delta_0)=C_2e^{4e^2}\ge 1,\quad \beta(\delta_0)= 4e^2\ge 1,\quad \gamma(\delta_0)=2.   \end{equation*}
Finally, we complete the proof by computing
\begin{equation*}
 f(\delta_0)=CC_1^{-1}\delta_0\gamma(\delta_0)=2CC_1^{-1}C_4^\prime\exp(-C_2e^{4e^2})=\frac{99}{1200 C^3e^2}\exp(-C_2e^{4e^2})< 1.  
\end{equation*}

\end{proof}

\bibliographystyle{siam}
\bibliography{Ref}

\end{document}